\documentclass[oneside, 12pt]{amsart}

\usepackage[utf8]{inputenc}
\usepackage[margin=1in]{geometry}
\usepackage{amsthm}
\usepackage{enumitem}  
\usepackage[OT2,T1]{fontenc}
\usepackage{amscd, amssymb, enumitem, amsmath,mathrsfs, amsfonts, amsaddr}
\usepackage{url}
\usepackage{tikz}
\usepackage{fullpage}
\usepackage{microtype}
\usetikzlibrary{decorations.pathreplacing}
\usepackage[backref=page]{hyperref}
\usepackage{hyperref}
\usepackage[utf8]{inputenc}
\usepackage[main=english,russian]{babel}

\DeclareSymbolFont{cyrletters}{OT2}{wncyr}{m}{n}
\DeclareMathSymbol{\Sha}{\mathalpha}{cyrletters}{"58}

\setlength{\textwidth}{17cm}
\setlength{\textheight}{9in}
\setlength{\oddsidemargin}{-0.0in}

\usepackage[utf8]{inputenc}

\newtheorem{theorem}{Theorem}[section]
\newtheorem{lemma}[theorem]{Lemma}
\newtheorem{proposition}[theorem]{Proposition}
\newtheorem*{proposition*}{Proposition}

\newtheorem*{questiona*}{Question A}
\newtheorem*{questionb*}{Question B}
\newtheorem*{theorem*}{Theorem}

\theoremstyle{definition}

\newtheorem{example}[theorem]{Example}
\newtheorem{conjecture}[theorem]{Conjecture}
\newtheorem{claim}[theorem]{Claim}
\newtheorem{remark}[theorem]{Remark}
\newtheorem{emptyremark}[theorem]{}
\newtheorem*{acknowledgement}{Acknowledgements}

\theoremstyle{remark}

\title{A divisibility related to the Birch and Swinnerton-Dyer conjecture}
\author{Mentzelos Melistas}
\address{ ~~~~~ email: mentzmel@gmail.com}
\date{\today}

\usepackage{draftwatermark}
\SetWatermarkText{}
\SetWatermarkScale{3}

\begin{document}

\maketitle

\begin{abstract}
Let $E/\mathbb{Q}$ be an optimal elliptic curve of analytic rank zero. It follows from the Birch and Swinnerton-Dyer conjecture for elliptic curves of analytic rank zero that the order of the torsion subgroup of $E/\mathbb{Q}$ divides the product of the order of the Shafarevich--Tate group of $E/\mathbb{Q}$, the (global) Tamagawa number of $E/\mathbb{Q}$, and the Tamagawa number of $E/\mathbb{Q}$ at infinity. This consequence of the Birch and Swinnerton-Dyer conjecture was noticed by Agashe and Stein in 2005. In this paper, we prove this divisibility statement unconditionally in many cases, including the case where the curve $E/\mathbb{Q}$ is semi-stable.
\end{abstract}

{\it Keywords:} elliptic curve, Shafarevich--Tate group, Tamagawa number, torsion point, BSD conjecture.

\section{Introduction}

Let $A/\mathbb{Q}$ be an abelian variety with analytic rank $0$ which is a quotient of the modular Jacobian $J_0(N)/\mathbb{Q}$ attached to a newform. Assume that $A/\mathbb{Q}$ is optimal, i.e., the associated map $J_0(N) \rightarrow A$ has connected kernel. It follows from work of Kolyvagin and Logach\"{e}v  (see \cite{kolyvaginlogachev89} and \cite{kolyvaginlogachev92}) that both $A(\mathbb{Q})$ and the Shafarevich--Tate group $\Sha(A/\mathbb{Q})$ of $A/\mathbb{Q}$ are finite. Let $L(A,s)$ be the $L$-function of $A/\mathbb{Q}$ and let $m_A$ be the Manin constant of $A/\mathbb{Q}$. Agashe and Stein in \cite[Page 468]{agashestein} proved that
\begin{align*}
    c_{\infty}(A) \cdot m_A \cdot |A(\mathbb{Q})_{\rm{tors}}| \cdot \frac{L(A,1)}{\Omega(A)} \in \mathbb{Z},
\end{align*}
where $c_{\infty}(A)$ is the number of connected components of $A(\mathbb{R})$ and $\Omega(A)$ is the real period of $A/\mathbb{Q}$. 

The (second part of the) Birch and Swinnerton-Dyer conjecture for analytic rank $0$ abelian varieties (see \cite[Conjecture F.4.1.6]{hindrysilverman}) is the following.
\begin{conjecture}\label{bsdrank0}
    Let $A/\mathbb{Q}$ be an abelian variety with $L$-function $L(A,s)$. If $L(A,1) \neq 0$, then $$\frac{L(A,1)}{\Omega(A)}=\frac{|\Sha(A/\mathbb{Q})|\cdot \prod_{p} c_p(A)}{|A(\mathbb{Q})_{\textrm{tors}}| \cdot |A^{\vee}(\mathbb{Q})_{\textrm{tors}}|},$$
 where $c_p(A)$ denotes the Tamagawa number of $A/\mathbb{Q}$ at $p$ and $A^{\vee}/\mathbb{Q}$ is the abelian variety dual to $A/\mathbb{Q}$. 
\end{conjecture}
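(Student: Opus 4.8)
The plan is to prove the formula by comparing the $p$-adic valuations of its two sides for each prime $p$ separately; since both sides are positive rationals, establishing equality of $p$-parts for every $p$ yields the full identity. Fix a prime $p$ and work with the $p$-adic Galois representation $V_p(A)$ on the Tate module of $A/\mathbb{Q}$. The central goal is the Iwasawa Main Conjecture for $A$ at $p$: over the cyclotomic $\mathbb{Z}_p$-extension $\mathbb{Q}_\infty/\mathbb{Q}$ one forms the Selmer group $\mathrm{Sel}(A/\mathbb{Q}_\infty)$, takes its Pontryagin dual $X$, and shows that the characteristic ideal $\mathrm{char}_\Lambda(X)$ in $\Lambda = \mathbb{Z}_p[[\mathrm{Gal}(\mathbb{Q}_\infty/\mathbb{Q})]]$ is generated by a $p$-adic $L$-function $L_p(A)$ interpolating the central critical values. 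Since $L(A,1) \neq 0$, a control theorem relating $X$ to the $p$-primary Selmer group over $\mathbb{Q}$ forces the latter to be finite, and its order is read off from the value of $\mathrm{char}_\Lambda(X)$ at the trivial character.

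The two opposite divisibilities constituting the Main Conjecture come from genuinely different machines. The inclusion $\mathrm{char}_\Lambda(X) \supseteq (L_p(A))$ I would obtain from an Euler system bound in the style of Kato, using Beilinson elements in the $K_2$ of modular curves to bound the Selmer group from above by $L_p(A)$; this requires realizing $A$ inside the Jacobian $J_0(N)$, which the optimality hypothesis supplies. The reverse inclusion $\mathrm{char}_\Lambda(X) \subseteq (L_p(A))$ is the analytically hard direction and, for $p$ of ordinary reduction, I would supply by a Skinner--Urban style argument producing congruences between Eisenstein series and cusp forms on a larger group such as $\mathrm{GU}(2,2)$, forcing the dual Selmer group to be large whenever $L_p(A)$ is divisible by $p$. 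Combining the two inclusions, specializing via the control theorem, and matching $L_p(A)$ at the trivial character with the complex ratio $L(A,1)/\Omega(A)$ through the interpolation formula would then recover the right-hand side of the conjecture: the Tamagawa factors $c_p(A)$ enter through the local Euler factors at the bad primes, the factor $|\Sha(A/\mathbb{Q})|$ appears as the order of the finite Selmer group, and the denominators $|A(\mathbb{Q})_{\textrm{tors}}|$ and $|A^{\vee}(\mathbb{Q})_{\textrm{tors}}|$ enter through the comparison of the N\'eron and modular integral lattices together with the Manin constant $m_A$.

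The hard part — and the reason the statement is posed as a conjecture rather than a theorem — is that none of the Euler-system or Eisenstein-congruence machinery is available in the generality of an arbitrary abelian variety $A/\mathbb{Q}$. Kato's Euler system and the Skinner--Urban construction are intrinsically tied to $\mathrm{GL}_2$-type situations (elliptic curves and their weight-two modular realizations), so the argument above is only meaningful when $A$ is of $\mathrm{GL}_2$-type; even then the reverse divisibility remains unknown at primes of additive or supersingular reduction in various configurations and requires hypotheses (residual irreducibility, big image, ramification conditions) that exclude many cases. I therefore do not expect an unconditional proof of the full formula by this route: the honest outcome of the program is the $p$-part of the identity at those primes $p$ where the Main Conjecture and the requisite control and interpolation results can all be verified simultaneously, with the remaining primes — and above all the passage from $\mathrm{GL}_2$-type to a general quotient $A$ of $J_0(N)$ — constituting the genuine obstruction.
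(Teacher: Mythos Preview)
The statement you are attempting to prove is Conjecture~\ref{bsdrank0}, the Birch and Swinnerton-Dyer formula in analytic rank zero. The paper does not prove this statement and makes no claim to: it is recorded precisely as a conjecture, and the rest of the paper only extracts from it the divisibility of Conjecture~\ref{conjagashestein}, which is then attacked by entirely elementary and unconditional means (explicit Weierstrass models, Tate's algorithm, Mazur's torsion classification, and local root-number bookkeeping). There is therefore no ``paper's own proof'' to compare against.

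Your outline is an accurate high-level summary of the Kato/Skinner--Urban program toward the $p$-part of BSD in the $\mathrm{GL}_2$ setting, and you correctly flag its limitations: the Euler system and Eisenstein-congruence inputs are only available for modular forms, not for arbitrary abelian varieties; even in the elliptic-curve case one needs hypotheses such as good ordinary reduction, residual irreducibility, and big image; and nothing in the sketch handles the primes where these fail. So the honest assessment is that what you have written is not a proof but a description of a partially successful strategy, and you say as much in your final paragraph. That is the right conclusion --- there is a genuine gap, namely that the full BSD formula is open --- but it means the proposal should not be presented as a proof of the conjecture. If your goal is to engage with the paper, the relevant task is not to prove Conjecture~\ref{bsdrank0} but to prove the much weaker divisibility in Conjecture~\ref{conjagashestein} (or Theorem~\ref{maintheoremsemistable}) directly, without assuming BSD.
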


Rearranging the equation of Conjecture \ref{bsdrank0} and multiplying both sides by $c_{\infty}(A) \cdot m_A$ we find the following conjectural equality
\begin{align*}
    c_{\infty}(A) \cdot m_A \cdot |A(\mathbb{Q})_{\rm{tors}}| \cdot \frac{L(A,1)}{\Omega(A)} \cdot |A^{\vee}(\mathbb{Q})_{\textrm{tors}}| = c_{\infty}(A) \cdot m_A \cdot  |\Sha(A/\mathbb{Q})|\cdot \prod_{p} c_p(A).
\end{align*}

 Since $c_{\infty}(A) \cdot m_A \cdot |A(\mathbb{Q})_{\rm{tors}}| \cdot \frac{L(A,1)}{\Omega(A)} \in \mathbb{Z}$, using the above conjectural equality, we find that Conjecture \ref{bsdrank0} implies that $|A^{\vee}(\mathbb{Q})_{\textrm{tors}}|$ divides $c_{\infty}(A) \cdot m_A \cdot  |\Sha(A/\mathbb{Q})|\cdot \prod_{p} c_p(A)$. The first to notice this conjectural divisibility statement were Agashe and Stein in \cite[Page 468]{agashestein}. We state this divisibility as a conjecture below.

\begin{conjecture}\label{conjagashestein}
 Let $A/\mathbb{Q}$ be an abelian variety with $L(A,1)\neq 0$ which is an optimal quotient of $J_0(N)/\mathbb{Q}$ attached to a newform. Then 
    \begin{align*}
        |A^{\vee}(\mathbb{Q})_{\textrm{tors}}| \text{ divides } c_{\infty}(A) \cdot m_A \cdot  |\Sha(A/\mathbb{Q})|\cdot \prod_{p} c_p(A).
    \end{align*}
\end{conjecture}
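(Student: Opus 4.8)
The plan is to prove the divisibility one rational prime at a time: for each prime $\ell$ I would establish $v_\ell(|A^\vee(\mathbb{Q})_{\mathrm{tors}}|)\le v_\ell(c_\infty(A)\, m_A\, |\Sha(A/\mathbb{Q})|\, \prod_p c_p(A))$, where $v_\ell$ denotes the $\ell$-adic valuation. The Agashe--Stein integrality statement recalled above is the organising principle, but the actual input is a local analysis of the reduction of torsion points against Néron models, carried out for the \emph{dual} variety $A^\vee$ (which, unlike in the elliptic-curve case, genuinely differs from $A$). First I would dispose of the place at infinity and the prime $\ell=2$: the factor $c_\infty(A)=|A(\mathbb{R})/A(\mathbb{R})^0|$ is a power of $2$, and the Manin constant $m_A$ is expected to equal $1$ and is known to be supported only on small primes in the cases of interest (Mazur, Abbes--Ullmo, and Cesnavicius). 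These two factors absorb $\ell=2$, isolating the odd primes $\ell$ coprime to $m_A$, where the genuine arithmetic lives.

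Fix such an odd $\ell$. Using the closed immersion $A^\vee\hookrightarrow J_0(N)$ dual to the optimal quotient map, realise $A^\vee(\mathbb{Q})[\ell^\infty]$ inside $J_0(N)(\mathbb{Q})[\ell^\infty]$, and consider the specialisation into component groups. Writing $\mathcal{A}^\vee/\mathbb{Z}$ for the Néron model and $\Phi_{A^\vee,p}$ for the component group at $p$, the reduction maps assemble into $\rho\colon A^\vee(\mathbb{Q})[\ell^\infty]\to\prod_{p\mid N}\Phi_{A^\vee,p}(\mathbb{F}_p)$, since good primes contribute nothing. This gives an exact sequence $0\to T\to A^\vee(\mathbb{Q})[\ell^\infty]\to \operatorname{im}\rho\to 0$ with $T=\ker\rho$, so the valuation splits. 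The image is bounded by $v_\ell(\operatorname{im}\rho)\le v_\ell(\prod_p c_p(A^\vee))$, and I would then convert $c_p(A^\vee)$ into $c_p(A)$ via Grothendieck's monodromy pairing $\Phi_{A,p}\times\Phi_{A^\vee,p}\to\mathbb{Q}/\mathbb{Z}$: when this is a perfect pairing of Galois modules one has $\Phi_{A^\vee,p}\cong\operatorname{Hom}(\Phi_{A,p},\mathbb{Q}/\mathbb{Z})$, and because the invariants and coinvariants of a finite module under Frobenius have equal order, $c_p(A)=c_p(A^\vee)$. Perfectness is a theorem for semistable reduction (Grothendieck) and over finite residue fields (Bosch--Lorenzini, Bertapelle), which is exactly why the semistable hypothesis makes this step clean.

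It remains to control the \emph{invisible} torsion $T$, namely the rational $\ell$-power points of $A^\vee$ reducing into the identity component of every special fibre; this is the only essentially global input, and it is where $\Sha(A/\mathbb{Q})$ enters. Here I would argue that a point of $T$ of order $\ell$, paired against $A[\ell]$ by the Weil pairing, yields a Galois-stable homomorphism $A[\ell]\to\mu_\ell$, equivalently a rational $\ell$-isogeny of $A$ and hence an $\ell$-congruence satisfied by the newform $f$; a visibility/descent argument should then attach to it a class in $\Sha(A/\mathbb{Q})[\ell]$, giving $v_\ell(|T|)\le v_\ell(|\Sha(A/\mathbb{Q})|)$. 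Combining this with the previous paragraph yields $v_\ell(|A^\vee(\mathbb{Q})_{\mathrm{tors}}|)\le v_\ell(\prod_p c_p(A))+v_\ell(|\Sha(A/\mathbb{Q})|)$, which is the desired bound. The main obstacle is precisely this last step: bounding $T$ by $\Sha$ unconditionally is the content the Birch and Swinnerton-Dyer formula predicts, so it cannot come from formal arguments alone, and I would expect to handle it in the semistable case by using the cuspidal/Eisenstein description of $A^\vee(\mathbb{Q})_{\mathrm{tors}}$ inherited from $J_0(N)$ to pin down $T$. The secondary obstacle, relevant only under additive reduction, is the perfectness of Grothendieck's pairing at those bad primes whose component groups carry $\ell$-torsion.
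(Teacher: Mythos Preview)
The statement you are attempting to prove is labelled a \emph{conjecture} in the paper, and the paper does not claim to prove it in this generality. The only ``proof'' the paper offers for Conjecture~\ref{conjagashestein} is the conditional derivation from BSD in the paragraph immediately preceding it: Agashe--Stein show that $c_\infty(A)\,m_A\,|A(\mathbb{Q})_{\rm tors}|\cdot L(A,1)/\Omega(A)\in\mathbb{Z}$, and plugging in the conjectural BSD formula for $L(A,1)/\Omega(A)$ yields the divisibility. That is not what you wrote, and it is explicitly conditional.

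Your unconditional attempt has two genuine gaps. First, the assertion that $c_\infty(A)$ and $m_A$ ``absorb $\ell=2$'' is unjustified: $c_\infty(A)$ being a power of $2$ in no way implies that it bounds $v_2(|A^\vee(\mathbb{Q})_{\rm tors}|)$. Already for an elliptic curve with a rational point of order $4$ one has $v_2=2$ on the left but $c_\infty(E)\le 2$; the paper devotes Proposition~\ref{proptorsion4} to this case and needs the finite Tamagawa numbers, not $c_\infty$ alone. Second, and more fundamentally, you yourself name the real obstacle: bounding the invisible torsion $T=\ker\rho$ by $|\Sha(A/\mathbb{Q})|$. The sentence ``a visibility/descent argument should then attach to it a class in $\Sha(A/\mathbb{Q})[\ell]$'' is a hope, not an argument; there is no general mechanism producing nontrivial $\Sha$ from invisible rational torsion, and this is exactly the content BSD supplies. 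Without it the whole scheme collapses to the trivial bound $v_\ell(|A^\vee(\mathbb{Q})_{\rm tors}|)\le v_\ell(\prod_p c_p(A)) + v_\ell(|T|)$ with $|T|$ uncontrolled.

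For context, the paper's unconditional results (Theorems~\ref{maintheoremsemistable} and~\ref{maintheorem}) are only for elliptic curves, and the method there is completely different from your outline: a case split over Mazur's list of torsion subgroups, explicit Weierstrass parametrisations, Tate's algorithm for local Tamagawa numbers, and in the hardest case $E(\mathbb{Q})_{\rm tors}\cong\mathbb{Z}/3\mathbb{Z}$ an explicit $3$-isogeny descent together with root-number parity and, for the optimal semistable curves with a single split prime, a direct computation with cuspidal divisors on $X_0(N)$ and component groups of $J_0(N)$.
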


In this article we are interested in Conjecture \ref{conjagashestein} when  $A/\mathbb{Q}$ has dimension $1$, i.e., $A=E$ is an elliptic curve. In this case, $A^{\vee}/\mathbb{Q}$ is canonically isomorphic to $E/\mathbb{Q}$. Under our assumptions on $A/\mathbb{Q}$, Manin's conjecture (see \cite[Conjecture 2.1]{agasheribetstein}) predicts that $m_A=1$. If $E/\mathbb{Q}$ is in addition semi-stable, then it follows from work of Česnavičius (see \cite{cesnaviciusmaninsemistable}) that $m_E=1$. 

Our first Theorem confirms Conjecture \ref{conjagashestein} when $E/\mathbb{Q}$ is semi-stable.

\begin{theorem}\label{maintheoremsemistable}
Let $E/\mathbb{Q}$ be a semi-stable optimal elliptic curve such that $L(E,1) \neq 0$. Then $|E(\mathbb{Q})_{\rm{tors}}|$ divides $c_{\infty}(E) \cdot |\Sha(E/\mathbb{Q})|\cdot \prod_{p} c_p(E)$.
\end{theorem}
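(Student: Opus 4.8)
The plan is to leverage the Agashe--Stein integrality statement $c_{\infty}(E)\cdot m_E\cdot |E(\mathbb{Q})_{\mathrm{tors}}|\cdot \frac{L(E,1)}{\Omega(E)}\in\mathbb{Z}$ together with the Gross--Zagier/Kolyvagin machinery that, for analytic rank zero, gives an \emph{upper bound} on $\Sha$ via the $p$-part of the BSD formula. More precisely, for a semi-stable optimal $E/\mathbb{Q}$ with $L(E,1)\neq 0$ we have $m_E=1$ by Česnavičius, so the quantity $\lambda := c_{\infty}(E)\cdot |E(\mathbb{Q})_{\mathrm{tors}}|\cdot \frac{L(E,1)}{\Omega(E)}$ is a positive integer. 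The conjectural BSD equality would read $\lambda\cdot |E(\mathbb{Q})_{\mathrm{tors}}| = c_{\infty}(E)\cdot |\Sha(E/\mathbb{Q})|\cdot\prod_p c_p(E)$; since we cannot prove BSD, the strategy is instead to show that $|E(\mathbb{Q})_{\mathrm{tors}}|$ divides the right-hand side directly, prime by prime, by producing enough Tamagawa factors (or $\Sha$, or $c_\infty$) to absorb each prime dividing the torsion order.

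The key steps, in order: (1) Reduce to a prime-by-prime statement: it suffices to show that for every prime $\ell$, $v_\ell(|E(\mathbb{Q})_{\mathrm{tors}}|) \le v_\ell\big(c_{\infty}(E)\cdot |\Sha(E/\mathbb{Q})|\cdot\prod_p c_p(E)\big)$. (2) Invoke Mazur's classification of torsion subgroups over $\mathbb{Q}$, so that $|E(\mathbb{Q})_{\mathrm{tors}}|\in\{1,\dots,10,12\}$ and the only relevant primes are $\ell\in\{2,3,5,7\}$, with $v_2\le 3$ (for $\mathbb{Z}/8$ or $\mathbb{Z}/2\times\mathbb{Z}/4$... actually $v_2\le 2$ for cyclic, but $\mathbb{Z}/2\times\mathbb{Z}/8$ does not occur; the maximal $2$-power is $8$ so $v_2\le 3$ only via $\mathbb{Z}/2\times\mathbb{Z}/4$ giving $v_2=3$), and $v_3\le 1$, $v_5\le 1$, $v_7\le 1$. (3) For a rational torsion point $P$ of prime order $\ell$, use the theory of Néron models: if $E$ has good reduction at $p$ then $P$ reduces injectively (for $p\nmid \ell$), while if $E$ has bad (hence multiplicative, by semi-stability) reduction at a prime $p$, then the component group at $p$ is cyclic of order $v_p(\Delta)$, and $P$ either lands in the identity component — imposing a congruence that is hard to satisfy at many primes simultaneously, forcing $p\equiv\text{(something)}$ — or it maps to a point of order $\ell$ in the component group, contributing $\ell\mid c_p(E)$. (4) Run a counting/congruence argument: if $P$ has order $\ell$ and does \emph{not} produce a factor of $\ell$ in $\prod_p c_p(E)$, then at every bad prime $P$ lies in the identity component, and one shows using the Weil pairing, the explicit form of the $\ell$-division polynomial, or Ogg's formula together with the action of inertia, that this is incompatible with $E$ being semi-stable with $L(E,1)\neq0$ — or else it must already be accounted for by $\Sha$ or $c_\infty$. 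The real workhorse here is the relationship, for semi-stable curves, between the existence of a rational $\ell$-isogeny (equivalently a rational point of order $\ell$) and the Tamagawa numbers, going back to work of Mazur and refined by Lorenzini and others: a rational point of order $\ell$ on a semi-stable curve forces $\ell\mid c_p(E)$ for at least one bad prime $p$, \emph{unless} $\ell$ is small and the reduction data is very constrained, in which case $c_\infty$ or a $2$-descent argument supplies the missing factor.

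The main obstacle I anticipate is the prime $\ell=2$, and to a lesser extent $\ell=3$, where the torsion can be large ($v_2$ up to $3$) and Tamagawa numbers at multiplicative primes are only as large as $v_p(\Delta)$, which need not be divisible by a high power of $2$ or $3$; here one likely has to exploit $c_{\infty}(E)$ (which is $2$ when $E(\mathbb{R})$ has two components, i.e.\ when $\Delta>0$) and possibly carry out a genuine $2$-descent or appeal to the parity of the $2$-Selmer rank to produce an even $|\Sha|$ when the Tamagawa and archimedean factors fall short. A secondary difficulty is making the "identity component" congruence argument uniform: one must rule out, or otherwise handle, the sporadic configurations where a torsion point of order $\ell$ reduces into the identity component at every bad prime, which requires combining the global constraint from $L(E,1)\neq0$ (e.g.\ via the Manin constant being $1$ and the Agashe--Stein integrality, which already forces $\lambda\in\mathbb{Z}$ and hence gives a first inequality "for free" at primes $\ell$ with $v_\ell\big(c_\infty(E)\cdot|E(\mathbb{Q})_{\mathrm{tors}}|\big) > v_\ell\big(\text{denominator of }L(E,1)/\Omega(E)\big)$) with local information at each bad prime. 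I expect the proof to split into cases according to $|E(\mathbb{Q})_{\mathrm{tors}}|$, with the cyclic-of-prime-order cases handled cleanly by the isogeny/Tamagawa dictionary and the composite cases ($6$, $8$, $10$, $12$ and $\mathbb{Z}/2\times\mathbb{Z}/4$) requiring the combined use of several bad primes, $c_\infty$, and the Agashe--Stein bound.
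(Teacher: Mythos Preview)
Your overall architecture---reduce via Mazur's classification, then produce enough factors of $\ell$ in $c_\infty(E)\cdot c(E)\cdot|\Sha(E/\mathbb{Q})|$ for each prime $\ell$ dividing the torsion order---matches the paper. But you have misdiagnosed where the difficulty lies, and that misdiagnosis leads to a real gap.

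You flag $\ell=2$ as the main obstacle and $\ell=3$ as a ``lesser'' one. In the semi-stable setting it is the reverse. For $\mathbb{Z}/2\mathbb{Z}$ torsion on a semi-stable curve one has the Mestre--Oesterl\'e model $y^2=x^3+ax^2+bx$ with $(a,b)=1$; any odd prime dividing $b$ contributes an even Tamagawa number, and the residual finitely many cases are easily listed (none optimal). All the larger $2$-power torsion subgroups---including $\mathbb{Z}/2\mathbb{Z}\times\mathbb{Z}/8\mathbb{Z}$, which does occur and gives $v_2=4$, contrary to what you wrote---are handled by earlier results of Lorenzini and of Byeon--Kim--Yhee showing $|E(\mathbb{Q})_{\mathrm{tors}}|\mid c(E)$ outside a short explicit list. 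So $\ell=2$ does not require $\Sha$ at all and your proposed $2$-descent/parity machinery is never invoked.

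By contrast, $E(\mathbb{Q})_{\mathrm{tors}}\cong\mathbb{Z}/3\mathbb{Z}$ is genuinely hard, and your Step~(4) ``congruence argument'' does not resolve it: there exist semi-stable curves with a rational $3$-torsion point for which the point lies in the identity component at every bad prime and $3\nmid c(E)$. The paper closes this case in two ways that your sketch does not anticipate. First, when $E$ has at least two primes of split multiplicative reduction (forced by a root-number computation when the situation is generic), one uses the natural $3$-isogeny $E\to\widehat{E}=E/\langle(0,0)\rangle$ and compares Tamagawa numbers to show $\mathrm{ord}_3\big(\prod_p c_p(\widehat{E})/\prod_p c_p(E)\big)\ge 2$, which via the isogeny-invariance of BSD-type quantities yields $9\mid|\Sha(E/\mathbb{Q})|$. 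Second, when $E$ has exactly one prime $p$ of split multiplicative reduction, the argument uses optimality in an essential way: one pushes the torsion point through the dual of the modular parametrization $\widehat{\psi}:E\to J_0(N)$, computes its image explicitly as a cuspidal divisor, and then reduces into the component group $\Phi_N(p)$ to show the image is nontrivial---forcing $3\mid c_p(E)$. This last step is not a mere exclusion of finite exceptions; it is where the hypothesis ``optimal'' does real work, and nothing in your outline (identity-component congruences, Agashe--Stein integrality, or Selmer parity) substitutes for it.
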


Our strategy for proving Theorem \ref{maintheoremsemistable} is the following. By a celebrated theorem of Mazur (see \cite[Theorem (8)]{maz}) there is a classification of all the possible rational torsion subgroups of rational elliptic curves. Using this classification, we perform a case by case analysis using explicit equations of elliptic curves with torsion points. The hardest cases to handle are the cases where the torsion subgroup is isomorphic to $\mathbb{Z}/2\mathbb{Z}$ or $\mathbb{Z}/3\mathbb{Z}$. Along the way we prove the following theorem, which does not require the assumption that $E/\mathbb{Q}$ is optimal.

\begin{theorem}\label{maintheorem}
Let $E/\mathbb{Q}$ be an elliptic curve.
\begin{enumerate}
    \item If $E(\mathbb{Q})_{\rm{tors}} \ncong \mathbb{Z}/2\mathbb{Z}, \mathbb{Z}/3\mathbb{Z}$, then $|E(\mathbb{Q})_{\rm{tors}}|$ divides $c_{\infty}(E) \cdot \prod_{p} c_p(E)$ except for the curves with Cremona \cite{cremonabook} label 11a3, 14a4, 14a6, 15a3, 15a7, 15a8, 17a2, 17a4, 20a2, 21a4, 24a4, and 32a2.
    \item Assume that $E(\mathbb{Q})_{\rm{tors}} \cong \mathbb{Z}/2\mathbb{Z}$. If $E/\mathbb{Q}$ is semi-stable away from $2$, then assume also that $E/\mathbb{Q}$ has semi-stable reduction modulo $2$. Then $|E(\mathbb{Q})_{\rm{tors}}|$ divides $c_{\infty}(E) \cdot \prod_{p} c_p(E)$, except for the curves with Cremona \cite{cremonabook} label 15a8, 39a4, and 55a4.
    \item Assume that $E/\mathbb{Q}$ has analytic rank $0$ with $E(\mathbb{Q})_{\rm{tors}} \cong \mathbb{Z}/3\mathbb{Z}$ and that $j_E \neq 0,1728$, where $j_E$ is the $j$-invariant of $E/\mathbb{Q}$. Then $|E(\mathbb{Q})_{\rm{tors}}|$ divides $\prod_{p} c_p(E) \cdot |\Sha(E/\mathbb{Q})|$, except possibly if $E/\mathbb{Q}$ is an elliptic curve which satisfies either $(a)$ or $(b)$ below.
    \begin{enumerate}[label=(\alph*)]
       \item $E/\mathbb{Q}$ is semi-stable away from 3, has at most one place of split multiplicative reduction, and the local root number $w_3(E)$ of $E/\mathbb{Q}$ at $3$ is equal to $1$.
       \item $E/\mathbb{Q}$ is semi-stable away from 3 and has reduction type II or IV modulo $3$.
     \end{enumerate}
\end{enumerate}
\end{theorem}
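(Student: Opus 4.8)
The plan is to reduce all three parts to a local analysis of the images of torsion points in the component groups of the Néron model, supplemented in part (3) by a descent along the rational $3$-isogeny attached to the $3$-torsion point. For a place $v$ of $\mathbb{Q}$ write $\Phi_v$ for the group of connected components of the special fibre of the Néron model of $E$ over $\mathbb{Z}_v$ if $v$ is finite, and $\Phi_\infty := E(\mathbb{R})/E(\mathbb{R})^{\circ}$, so that $|\Phi_v(\mathbb{F}_v)| = c_v(E)$ and $|\Phi_\infty| = c_\infty(E)$. If $P \in E(\mathbb{Q})$ has order $\ell^{k}$ ($\ell$ prime) and the image of $\ell^{k-1}P$ in $\Phi_v$ is nonzero, then the image of $P$ in $\Phi_v$ has order $\ell^{k}$, so $\ell^{k}\mid c_v(E)$. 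Since it suffices to prove the divisibility one prime power at a time, this brings matters down to: given a rational point $Q$ of prime order $\ell$, produce a place $v$ at which $Q$ does not meet the identity component (and, for the non-cyclic torsion groups on Mazur's list, to show that two independent $2$-power classes have independent images in $\prod_v \Phi_v$). For $\ell = 2$ the place at infinity enters: $c_\infty(E) = 2$ exactly when $\Delta_E > 0$, and a $2$-torsion point $(e,0)$ lies off $E(\mathbb{R})^{\circ}$ precisely when $e$ is not the largest real root of the defining cubic.

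For parts (1) and (2) I would run through the finitely many possibilities for $E(\mathbb{Q})_{\mathrm{tors}}$ given by Mazur's theorem \cite{maz}. For each, start from the universal curve in Tate normal form over $\mathbb{Q}(t)$ carrying $Q$, compute $c_4(t)$, $c_6(t)$, $\Delta(t)$, and analyse a minimal model of the specialization at $t = a/b$ at each bad prime: at a prime of multiplicative reduction the component group is cyclic of order $v_p(\Delta_{\min})$ and the image of $Q$ is read off from the $p$-adic valuations of its coordinates (equivalently, from the Tate parametrization), while at the finitely many possible additive primes one invokes the Kodaira type directly — which for $\ell \ge 5$ never helps and for $\ell \in \{2,3\}$ contributes through the types $III, III^{*}, I_n^{*}$ (for $\ell = 2$) and $IV, IV^{*}$ (for $\ell = 3$). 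This should show that outside a bounded range of $t$ the point $Q$ is nontrivial in some $\Phi_v$; the finitely many remaining specializations are curves of small conductor, and a direct check picks out exactly the exceptional curves in the lists. In part (2) the hypothesis on the reduction at $2$ is forced: an additive prime $2$ of type $II$, $IV$, $II^{*}$ or $IV^{*}$ has odd Tamagawa number and occurs in an infinite family, so it cannot be absorbed into a finite list.

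For part (3), $E$ has a rational $3$-isogeny $\phi\colon E \to E' := E/\langle P\rangle$ with dual $\hat\phi$; since $L(E',1) = L(E,1) \neq 0$, both $E'(\mathbb{Q})$ and $\Sha(E'/\mathbb{Q})$ are finite and $E(\mathbb{Q}) = \langle P\rangle$. Comparing the $\phi$- and $\hat\phi$-Selmer groups by the Greenberg--Wiles formula — the local terms coming from the Kodaira types, from $\Phi_\infty$, and from the $3$-adic valuation of the factor relating the Néron differentials of $E$ and $E'$ — produces an identity of the form
\begin{equation*}
\#\mathrm{Sel}^{(\phi)}(E/\mathbb{Q}) \;=\; 3^{\,\epsilon}\cdot\frac{\prod_p c_p(E')}{\prod_p c_p(E)}\cdot \#\mathrm{Sel}^{(\hat\phi)}(E'/\mathbb{Q}),\qquad \epsilon\in\{0,1\}.
\end{equation*}
If the mod-$3$ representation $\bar\rho_{E,3}$ is non-split, equivalently $E'(\mathbb{Q})$ is trivial, then $\phi E(\mathbb{Q}) = 0$ while $\hat\phi E'(\mathbb{Q})$ has index $3$ in $E(\mathbb{Q})$, and the identity forces $3 \mid \#\Sha(E/\mathbb{Q})[\phi]\cdot\prod_p c_p(E)$, hence $3 \mid |\Sha(E/\mathbb{Q})|\cdot\prod_p c_p(E)$ with no exceptions. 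The remaining case is $\bar\rho_{E,3}$ split, i.e.\ $E$ has two independent rational $3$-isogenies; these form a one-parameter family, and now the identity only yields $v_3(\#\Sha(E)[\phi]) + v_3(\prod_p c_p(E)) = (\epsilon - 1) + v_3(\prod_p c_p(E')) + v_3(\#\Sha(E')[\hat\phi])$. One then has to bound the right-hand side from below: $\epsilon$ is controlled by the reduction type of $E$ at $3$, the factor $v_3(\prod_p c_p(E'))$ by how the component groups change at the multiplicative primes, and $v_3(\#\Sha(E')[\hat\phi])$ by a Kummer-theoretic computation of $\mathrm{Sel}^{(\hat\phi)}(E')\subseteq\mathbb{Q}^{\times}/(\mathbb{Q}^{\times})^{3}$, whose local conditions at the split multiplicative primes are governed by the corresponding local root numbers. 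A case analysis should show at least one of these contributions is nonzero unless $E$ is of type (a) or (b).

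The routine parts are numerous but mechanical; the genuine difficulty is the small-torsion regime — the $\mathbb{Z}/2\mathbb{Z}$-analysis of part (2), where additive reduction at $2$ with odd Tamagawa number has to be excluded by hypothesis rather than treated, and above all the split case of part (3), where wringing enough $3$-divisibility out of the Selmer comparison is delicate, and it is exactly the failure of this that produces the families (a) and (b).
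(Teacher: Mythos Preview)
Your plan for Parts~(i) and~(ii) is essentially the paper's: run through Mazur's list, put each curve in Tate normal form, read off the valuations of $\Delta$ and $c_4$ at the primes dividing the parameters, and reduce to a finite search. The paper cites Lorenzini and Byeon--Kim--Yhee for most torsion groups, proves the $\mathbb{Z}/4\mathbb{Z}$ and $\mathbb{Z}/2\mathbb{Z}\oplus\mathbb{Z}/6\mathbb{Z}$ cases by hand, and for Part~(ii) uses exactly the dichotomy you describe (additive at an odd prime forces $2\mid c_p$; otherwise semi-stable and a short computation with $y^2=x^3+ax^2+bx$).

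For Part~(iii) you are in the same circle of ideas---exploit the $3$-isogeny $\phi\colon E\to\widehat{E}$ and the induced relation between the two sides---but the paper's organisation is noticeably cleaner than what you propose. There is no split/non-split dichotomy for $\bar\rho_{E,3}$ and no Kummer-theoretic computation of a Selmer group inside $\mathbb{Q}^\times/(\mathbb{Q}^\times)^3$. Instead the paper first normalises to $y^2+axy+by=x^3$ and observes (via Kozuma's tables) that $3\nmid\prod_p c_p(E)$ forces $b=1$, which already gives semi-stability away from~$3$. Then a single lemma, drawn from the author's earlier paper and ultimately resting on Cassels' isogeny invariance, says that $\mathrm{ord}_3\bigl(\prod_p c_p(\widehat{E})/\prod_p c_p(E)\bigr)\ge 2$ implies $9\mid|\Sha(E/\mathbb{Q})|$. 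The remainder of the argument is a direct local count: at each prime $p\ne 3$ of split multiplicative reduction $\mathrm{ord}_3(c_p(\widehat{E})/c_p(E))=1$ and at all other $p\ne 3$ it is~$0$, so one just needs at least two split multiplicative primes (and $\mathrm{ord}_3$ of the local ratio at~$3$ to be~$0$). Root numbers supply the parity: when $w_3(E)=-1$ the split primes are even in number and an explicit factorisation of $a^2+3a+9$ produces at least one, hence at least two; when $w_3(E)=1$ or the reduction at~$3$ is of type~II or~IV one cannot conclude, and this is exactly how families~(a) and~(b) arise.

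Your non-split claim (``no exceptions'') is plausible but, as written, rests on the unproved assertion $\epsilon\in\{0,1\}$ in your Selmer identity; the archimedean term and the torsion quotients in the Greenberg--Wiles formula can contribute negatively, and you would need to pin down the sign to make the inequality go through. The paper sidesteps this by never invoking Selmer groups explicitly, working instead with the Tamagawa-ratio criterion, which makes the role of the reduction type at~$3$ and of the split-prime count completely transparent.
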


\begin{remark}
Since in each isogeny class of the Cremona database the curve with $1$ at the end of its label is the optimal one (with one exception; the curve with label 990H3 is optimal), we see that none of the exceptions of Parts $(i)$ and $(ii)$ of Theorem \ref{maintheorem} is optimal. Therefore, Parts $(i)$ and $(ii)$ of Theorem \ref{maintheorem} already prove Theorem \ref{maintheoremsemistable} when $E(\mathbb{Q})_{\rm{tors}} \ncong \mathbb{Z}/3\mathbb{Z}$.
\end{remark}

In Example \ref{examplesection2} below we present an elliptic curve curve $E/\mathbb{Q}$ with $E(\mathbb{Q})_{\rm{tors}} \cong \mathbb{Z}/2\mathbb{Z}$ such that $2$ does not divide $c_{\infty}(E) \cdot \prod_{p} c_p(E)$. Thus the reduction assumption is necessary in Part $(ii)$ of Theorem \ref{maintheorem}. Moreover, in Example \ref{examplesection3} below we present (non-optimal) elliptic curves $E/\mathbb{Q}$ with $E(\mathbb{Q})_{\rm{tors}} \cong \mathbb{Z}/3\mathbb{Z}$ such that $\prod_{p} c_p(E) \cdot |\Sha(E/\mathbb{Q})|=1$.  Thus we have to take into account the exceptions $(a)$ and $(b)$ in Part $(iii)$ of Theorem \ref{maintheorem}.

Part $(i)$ of Theorem \ref{maintheorem} builds on earlier work of Lorenzini in \cite{lor}, who initiated the study of Tamagawa numbers of abelian varieties with torsion points, as well as work of Byeon, Kim, and Yhee in \cite{bky2}. Moreover, for the proof of Part $(iii)$ of Theorem \ref{maintheorem} a variation of a technique developed by the author in \cite{mentzelosagasheconjecture} is used.

This paper is organized as follows. In Section \ref{section2} we study the interaction between torsion points and Tamagawa numbers when $E(\mathbb{Q})_{\rm{tors}} \ncong \mathbb{Z}/3\mathbb{Z}$ and we prove Parts $(i)$ and $(ii)$ of Theorem \ref{maintheorem}. Section \ref{section3} is more technical and is devoted to the proof of Part $(iii)$ of Theorem \ref{maintheorem}. Finally, in the last section we consider semi-stable elliptic curves and we prove Theorem \ref{maintheoremsemistable}.

\begin{acknowledgement}
The author would like to thank Dino Lorenzini for pointing out that some of the author's techniques in \cite{mentzelosagasheconjecture} could be applied to Conjecture \ref{conjagashestein}. The author would also like to thank Paul Voutier for some useful comments on an earlier version of this manuscript. I would also like to thank the anonymous referee for many insightful comments and many useful suggestions that greatly improved the exposition of this manuscript. This work was performed at the Steklov International Mathematical Center, while the author was a member there, and supported by the Ministry of Science and Higher education of the Russian Federation (Agreement no. 075-15-2019-1614).
\end{acknowledgement}

\section{Case $E(\mathbb{Q})_{\rm{tors}}\not\cong \mathbb{Z}/3\mathbb{Z}$}\label{section2}

Let $E/\mathbb{Q}$ be an elliptic curve and let $p$ be a prime number. The set $E_0(\mathbb{Q}_p)$ consisting of points with nonsingular reduction is a finite index subgroup of $E(\mathbb{Q}_p)$. The number $c_{p}(E)=[E(\mathbb{Q}_p):E_0(\mathbb{Q}_p)]$ is called the Tamagawa number of $E/\mathbb{Q}$ at $p$. We define the (global) Tamagawa number of $E/\mathbb{Q}$ as $c(E):=\prod_{p} c_{p}(E)$, where the product is taken over all the primes of bad reduction of $E/\mathbb{Q}$. We also define the Tamagawa number at infinity $c_\infty (E)$ for the place at infinity as follows: $c_\infty (E)=1$ if $\Delta(E)<0$ and $c_\infty (E)=2$ if $\Delta(E)>0$. Note that $c_\infty (E)$ is the number of connected components of $E(\mathbb{R})$ (see \cite[Corollary V.2.3.1]{silverman2}).

Tate, in \cite{tatealgorithm}, has produced an algorithm that computes the Tamagawa number of an elliptic curve defined over $\mathbb{Q}_p$. We recall a small part of the algorithm that we will use in this section and we refer the reader to \cite[Section IV.9]{silverman2} (or \cite{tatealgorithm}) for more details. Let $E/\mathbb{Q}_p$ be an elliptic curve given by a Weierstrass equation $$y^2+a_1xy+a_3y=x^3+a_2x^2+a_4x+a_6,$$ with $a_i \in \mathbb{Z}_p$ for $i = 1,2,3,4,6,$ ord$_p(a_3)>0$, ord$_p (a_4)>0$, ord$_p (a_6) >0$, and such that ord$_p(c_4)=0$ and ord$_p(\Delta) > 0$. Here $\Delta$ is the discriminant and $c_4$ is the $c_4$-invariant of the Weierstrass equation. Set $b_2:=a_1^2+4a_2$. Since ord$_p(c_4) = 0$ and ord$_p(a_3)$, ord$_p (a_4),$ ord$_p (a_6) >0$, we obtain that ord$_p(b_2)=0$ (see \cite[Section III.1]{aec} for the standard equation involving $c_4$ and $b_2$). Let $k'$ be the splitting field over $\mathbb{F}_p$ of the polynomial $T^2+a_1T+a_2$. The curve $E/\mathbb{Q}_p$ has split multiplicative reduction of type I$_n$ if ord$_p(\Delta)=n$ and $k'=\mathbb{F}_p$.  In this case, the Tamagawa number $c_p(E)$ of $E/\mathbb{Q}_p$ is equal to $n$. We will use the following observations repeatedly without explicit mention in this section.

\underline{Observation 1:}\label{observation1} If ord$_p(a_2),$ ord$_p(a_3),$ ord$_p(a_4),$ ord$_p (a_6) >0,$ and ord$_p(a_1)=0$, then $E/\mathbb{Q}_p$ has split multiplicative reduction of type I$_n$, where ord$_p(\Delta)=n$. Moreover, in this case $n = c_p(E)$.

\underline{Observation 2:}
When $E/\mathbb{Q}_p$ has multiplicative but not split multiplicative reduction, then the Tamagawa number $c_p(E)$ is $2$ if ord$_p(\Delta)$ is even, and $1$ otherwise.

Let $E/\mathbb{Q}$ be an elliptic curve with a $\mathbb{Q}$-rational point of order $4$. In \cite[Remark 2.6]{lor} Lorenzini asked whether the statement $4$ divides $c(E)\cdot c_\infty (E)$ is true with only finitely many exceptions. We prove this statement below.

\begin{proposition}\label{proptorsion4}
Let $E/\mathbb{Q}$ be an elliptic curve with a rational point of order $4$. Then $4$ divides  $c(E)\cdot c_\infty (E)$, except for the curves with Cremona \cite{cremonabook} labels 15a7, 15a8, 17a4, 21a4, and 24a4.
\end{proposition}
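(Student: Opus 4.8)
The plan is to use the explicit parametrization of elliptic curves with a rational point of order $4$ and then run Tate's algorithm (via Observations 1 and 2) at the relevant primes. Recall that every $E/\mathbb{Q}$ with a rational $4$-torsion point has, after a suitable change of variables, a Weierstrass model of the Tate-normal form
\begin{align*}
E_t: \quad y^2 + xy - t y = x^3 - t x^2, \qquad t \in \mathbb{Q}^\times,
\end{align*}
with the point $(0,0)$ of order $4$. Writing $t = u/w^2$ in lowest terms (or clearing denominators to get an integral model, e.g. replacing $t$ by an integer after scaling) one computes the discriminant and $c_4$-invariant explicitly as polynomials in $t$; a standard computation gives $\Delta(E_t) = t^4(1+16t)$ and $c_4(E_t) = 1 + 16t$ for the model above (up to the usual bookkeeping when passing to a global minimal model). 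So the primes of bad reduction are exactly those dividing $t$ (where, as we will see, the reduction is multiplicative) and those dividing $1+16t$.

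First I would treat the primes $p \mid t$. For such $p$, reducing the equation $y^2 + xy - ty = x^3 - tx^2$ modulo $p$ gives $y^2 + xy = x^3$, which has a node at the origin with $a_1 = 1$, so by Observation 1 (with $a_2, a_3, a_4, a_6$ all having positive valuation and $a_1 = 1$ a unit) the curve has split multiplicative reduction of type $\mathrm{I}_{n_p}$ with $n_p = \mathrm{ord}_p(\Delta) = 4\,\mathrm{ord}_p(t)$ — provided this model is minimal at $p$, which holds when $\mathrm{ord}_p(t)$ is not too large; one handles the non-minimal case by the usual change of variables, which only decreases $\mathrm{ord}_p(\Delta)$ by multiples of $12$ and leaves the reduction type split multiplicative. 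The upshot is that $c_p(E) = 4\,\mathrm{ord}_p(t)$ for at least one such $p$ unless $t$ is (up to sign and $12$th powers) $\pm 1$ times a unit — i.e. $t = \pm 1$, or there is cancellation forcing $t$ itself to be a unit everywhere, which essentially pins down a short finite list of $t$. For all $t$ outside this list, $4 \mid c_p(E) \mid c(E)$, and we are done. So the whole problem reduces to the finitely many ``small $t$'' curves.

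For the remaining finitely many curves (small numerator/denominator of $t$), I would combine two observations. When $\Delta(E_t) > 0$ we get the extra factor $c_\infty(E) = 2$; combined with a factor of $2$ coming from a prime dividing $t$ or from the reduction at a prime $p \mid 1 + 16t$ (where, if the reduction is multiplicative with even $\mathrm{ord}_p(\Delta)$, Observation 2 gives $c_p = 2$, and if it is additive of appropriate Kodaira type one reads off $c_p$ directly), this frequently already yields $4 \mid c(E)\cdot c_\infty(E)$. I would tabulate, for each of the finitely many $t$, the curve's Cremona label, the minimal discriminant, the list of Tamagawa numbers, and $c_\infty$, and check the divisibility directly; the curves $15a7$, $15a8$, $17a4$, $21a4$, $24a4$ are exactly those where this fails (there $\Delta < 0$ so $c_\infty = 1$, and the Tamagawa numbers are too small — e.g. a single factor of $2$), and I would display their equations and invariants to confirm they are genuine exceptions.

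The main obstacle I anticipate is bounding the ``small $t$'' case cleanly: the argument above shows $4 \nmid c(E)$ can only happen when no prime divides $t$ to the first power with a minimal $\mathrm{I}_1$ at that prime and simultaneously $\Delta < 0$, but making the list of such $t$ genuinely finite and short requires care with (i) the global minimal model versus the naive model $E_t$ (twisting by $12$th powers, Kraus-type conditions), and (ii) the reduction behaviour at $p \mid 1+16t$ and at $p = 2$, which can be additive and needs the full Tate algorithm rather than just Observations 1 and 2. Once the finite list is in hand, the verification is a routine finite check (conveniently done by consulting \cite{cremonabook}), and isolating the five stated exceptions is immediate.
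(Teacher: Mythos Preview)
Your approach is the same as the paper's, but the finiteness step --- which you flag as the ``main obstacle'' --- is exactly where your argument stalls, and the paper resolves it with two ingredients you are missing.

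First, you never properly separate numerator and denominator of the rational parameter. You write ``for primes $p\mid t$'' and deduce $c_p(E)=4\,\mathrm{ord}_p(t)$, but when $t=1/n$ there are no such primes and yet there are infinitely many such $t$; your sentence about ``$t=\pm 1$ up to sign and $12$th powers'' does not pin down a finite set. The paper writes $\lambda=s/t$ with $s,t$ coprime and $s>0$, and after the change of variables $x\mapsto x/t^2$, $y\mapsto y/t^3$ obtains the integral model $y^2+txy-st^2y=x^3-stx^2$ with $\Delta'=s^4t^7(16s+t)$. Now if $p\mid s$ then $p\nmid t$, so $a_1=t$ is a unit at $p$ and Observation~1 applies directly on this model (no minimality worries, no $12$th-power bookkeeping): the reduction is split $\mathrm{I}_{4\,\mathrm{ord}_p(s)}$ and $4\mid c_p(E)$. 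This disposes of every case with $s>1$.

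Second, for $s=1$ you still have infinitely many $t$, and here the paper invokes Lorenzini's prior result \cite[Proposition~2.4]{lor} that $2\mid c(E)$ except for 15a7, 15a8, 17a4. Combined with $c_\infty(E)=2$ whenever $\Delta'>0$, this immediately gives $4\mid c(E)c_\infty(E)$ in the positive-discriminant case. So one may assume $s=1$ and $\Delta'=t^7(16+t)<0$, which forces $-16<t<0$: a genuine finite list of integers, each of which is then checked. Your proposal never isolates the factor of $2$ from Lorenzini's theorem, and without it the $\Delta>0$ case does not close. (Incidentally, your formula $c_4=1+16t$ is wrong; one gets $c_4=16t^2+16t+1$, though the paper's argument does not use $c_4$ here.)
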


\begin{proof}
Let $E/\mathbb{Q}$ is an elliptic curve with a $\mathbb{Q}$-rational point of order $4$. Then $E/\mathbb{Q}$ can be given by an equation of the form
$$E_\lambda:\quad y^2+xy-\lambda y=x^3-\lambda x^2,$$ with $\lambda \in \mathbb{Q}$ (see \cite[Section 4.4]{hus}).
The discriminant of this Weierstrass equation is $$\Delta=\lambda^4(16\lambda+1).$$

Write $\lambda=\frac{s}{t}$ where $s,t$ are coprime integers and $s>0$. Using the change of variables $x \xrightarrow{} \frac{x}{t^2}, \; y \xrightarrow{} \frac{y}{t^3}$ we obtain a Weierstrass equation of the form
$$ y^2+txy-st^2y=x^3-stx^2$$ with discriminant $$\Delta'=s^4t^7(16s+t).$$
Lorenzini (see \cite[Proposition 2.4]{lor}) has proved that $2 \mid c(E)$ except for the curves with Cremona labels 15a7, 15a8 and 17a4. Therefore, if $\Delta'>0$ then we obtain that $4 \mid c(E)c_\infty (E)$. Hence, we can assume that $\Delta'<0$ from now on.

Assume first that $s>1$. If a prime $p \mid s$, then it follows from the discussion before Proposition \ref{proptorsion4} that $E/\mathbb{Q}$ has split multiplicative reduction modulo $p$ with ord$_p(\Delta')=4$ord$_p(s)$ and, hence, $4 \mid c_p(E)$. Assume now that $s=1$ (note that $s>0$ by assumption). If $\Delta'=t^7(16+t)<0$, then $-16<t<0$. Therefore, since $t$ is an integer there are only finitely many possibilities for $t$. Using SAGE \cite{sagemath} and computing the corresponding curves we obtain two more exceptions with Cremona labels 24a4 and 21a4.
\end{proof}

\begin{proposition}\label{proptorsion2x6}
If $E/\mathbb{Q}$ be an elliptic curve with $E(\mathbb{Q})_{tors} \cong \mathbb{Z}/2\mathbb{Z}\oplus \mathbb{Z}/6\mathbb{Z}$, then $|E(\mathbb{Q})_{\rm{tors}}|$ divides $c(E)$.
\end{proposition}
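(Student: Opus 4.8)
The plan is to use the explicit parametrization of elliptic curves with $\mathbb{Z}/2\mathbb{Z}\oplus\mathbb{Z}/6\mathbb{Z}\subseteq E(\mathbb{Q})$ and show that the presence of a $12$-torsion structure already forces enough multiplicative reduction to produce the factor $12$ in $c(E)=\prod_p c_p(E)$. First I would recall (e.g.\ from the tables in \cite{hus} or Kubert's parametrization) that such a curve $E/\mathbb{Q}$ admits a Weierstrass model depending on a single rational parameter; clearing denominators as in the proof of Proposition \ref{proptorsion4}, I would write the parameter as a ratio of coprime integers and record the resulting minimal-looking discriminant $\Delta'$ as an explicit product of a few integer factors. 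The key point will be that the factors of $\Delta'$ are coprime "most of the time," so that for a prime $p$ dividing one of them the reduction is of type $\mathrm{I}_n$ with $n=\mathrm{ord}_p(\Delta')$ (often a multiple of $2$ or $3$), and one checks split multiplicativity via the $T^2+a_1T+a_2$ criterion recalled before Proposition \ref{proptorsion4} (Observations 1 and 2).

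The main steps, in order: (1) fix the parametrized model and compute $\Delta'$, $c_4$ in terms of the integer parameter(s); (2) for each prime $p$ dividing a distinguished factor of $\Delta'$, run the relevant part of Tate's algorithm to conclude $E$ has multiplicative reduction at $p$ of type $\mathrm{I}_{n_p}$ and determine whether it is split, thereby reading off $c_p(E)$; (3) combine the $c_p(E)$ across the different primes (using coprimality of the factors to ensure the contributions are at distinct primes) to get $12\mid c(E)$; (4) handle the degenerate sub-cases where two factors share a common prime, or where the parameter is so small that only finitely many curves arise — in those finitely many cases, appeal to a direct computation in SAGE \cite{sagemath} exactly as in the end of the proof of Proposition \ref{proptorsion4}. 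A useful shortcut: since $E$ has full $2$-torsion, its three $2$-isogenous curves and the structure $\mathbb{Z}/6\mathbb{Z}$ mean one can also invoke known results on $c(E)$ for curves with a rational point of order $6$ and with full $2$-torsion separately, and then argue the two "halves" ($4\mid$ something and $3\mid$ something) occur at independent sets of primes; but the cleanest route is the direct discriminant analysis.

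The step I expect to be the main obstacle is (4): controlling the small-parameter and non-coprime-factor exceptions and verifying that none of them actually fails the divisibility. Unlike Proposition \ref{proptorsion4}, where genuine exceptions 15a7, 15a8, 17a4, 21a4, 24a4 survive, here the claim is that there are \emph{no} exceptions, so I must be sure that every finite sub-case still yields $12\mid c(E)$ — in practice one checks that curves with $E(\mathbb{Q})_{\rm{tors}}\cong\mathbb{Z}/2\mathbb{Z}\oplus\mathbb{Z}/6\mathbb{Z}$ have conductor divisible by enough primes (the smallest such curve has conductor $30$, i.e.\ label 30a) that the Tamagawa contributions cannot collapse. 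I would also need to be slightly careful about whether $c_\infty(E)$ is needed at all: the statement does not invoke it, so the discriminant sign should play no role, and indeed the multiplicative primes supply the full factor $12$ regardless of $\mathrm{sign}(\Delta)$.
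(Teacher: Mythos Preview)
Your proposal is correct and follows essentially the same route as the paper: take the one-parameter Weierstrass model for curves with $E(\mathbb{Q})_{\mathrm{tors}}\cong\mathbb{Z}/2\mathbb{Z}\oplus\mathbb{Z}/6\mathbb{Z}$, factor the discriminant, use Observations~1 and~2 to read off $c_p(E)$ at primes dividing the distinguished factors, and then dispose of the finitely many residual parameter values by direct computation. The paper's execution writes $t=a/b$ with $a,b$ coprime and organizes the case analysis around the prime divisors of $a$, $b$, and $(a-b)(a+b)$ (primes dividing $t$, $t-1$, or $t+1$ contribute a factor of $6$ via split $\mathrm{I}_{6m}$, primes dividing $b$ contribute a factor of $2$), which is exactly the ``coprimality of the factors'' step~(3) you outline; and you are right that $c_\infty(E)$ is never needed and that no exceptions survive.
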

\begin{proof}
Let $E/\mathbb{Q}$ be an elliptic curve with $E(\mathbb{Q})_{tors} \cong \mathbb{Z}/2\mathbb{Z}\oplus \mathbb{Z}/6\mathbb{Z}$. By \cite[Page 26]{rab} the curve $E/\mathbb{Q}$ can be given by a Weierstrass equation
\begin{align}\label{eq2x6}
    y^2+(-t^2+4t+1)xy-t(t-1)(t+1)^2(3t+1)y=x^3-t(t-1)(t+1)^2x^2,
\end{align}
for some $t \in \mathbb{Q}$.
The discriminant of this Weierstrass equation is $$\Delta=t^6(t-1)^6(t+1)^6(3t-1)^2(3t+1)^2.$$

If there exists a prime $p$ with either ord$_p(t)=m>0$ or ord$_p(t-1)=m>0$ or ord$_p(t+1)=m>0$, then we find that the reduction of $E/\mathbb{Q}$ modulo $p$ is split multiplicative of type I$_{6m}$ with $c_p(E)=6m$. Moreover, if there exists a prime $p$ such that ord$_p(t)=m<0$, then Equation $($\ref{eq2x6}$)$ is not integral at $p$, but the following equation is
\begin{align}
    y^2+(\mu^2+4\mu-1)xy-\mu(1-\mu)(1+\mu)^2(3+\mu)y=x^3-(1-\mu)(1+\mu)^2x^2,
\end{align} where $\mu = \frac{1}{t}$. The discriminant of the above equation is $$\Delta'=\mu^2(1-\mu)^6(1+\mu)^6(3-\mu)^2(3+\mu)^2$$ and $$c_4'=(3+\mu^2)(3+75\mu^2-15\mu^4+\mu^6).$$
Therefore, if $p \neq 3$, since ord$_p(c_4')=0$, then $E/\mathbb{Q}$ has multiplicative reduction at $p$ with $2 \mid c_p(E)$ because ord$_p(\Delta')$ is even. Finally, when $p=3$ and ord$_3(t)<-1$, which implies that ord$_3(\mu)>1$, we find that ord$_3(c_4')=2$ and that ord$_3(\Delta')>6$. Therefore, using \cite[Tableau II]{pap} we obtain that $E/\mathbb{Q}$ has modulo $3$ reduction of type I$_n^*$ for some $n \geq 1$ and, hence, $2 \mid c_3(E)$.

 We now use the observations of the previous paragraph to prove that $12$ divides $c(E)$. Write $t=\frac{a}{b}$, with $a,b \in \mathbb{Z}$ coprime and $b>0$. We then have that $t(t-1)(t+1)=\frac{a}{b}(\frac{a}{b}-1)(\frac{a}{b}+1)=\frac{a(a-b)(a+b)}{b^2}$. If $a$ has two or more prime divisors, then $6^2 \mid c(E)$ and our proposition is proved. Therefore, we can assume from now on that $a$ has at most one prime divisor. We split the proof into two cases, depending on whether $|a|$ is equal to $1$ or to a power of a prime.

Assume first that $a = \pm p^m$, for some prime $p$ and some $m>0$. We already have that $6 \mid c_p(E)$. If $b \neq 1$ or $3$, then $2 \mid c_r(E)$ for some divisor $r$ of $b$ because ord$_r(t)<0$. Therefore, we can assume that $b=1$ or $3$. We will show that $(a-b)(a+b)$ has at least one odd prime divisor $q$ and, hence, $6 \mid c_q(E)$. If $b=1$, then $(a-b)(a+b)=(a-1)(a+1)$, which cannot be a power of $2$. Therefore, we find that $(a-b)(a+b)$ has at least one odd prime divisor $q$, except for $a= \pm 3$. Consequently, we have that $6 \mid c_q(E)$, except possibly for $a= \pm 3$. If $b=3$, then $(a-b)(a+b)=(a-3)(a+3)$, but since $a-3$ and $a+3$ differ by $6$, they cannot both be a power of $2$, except for $a=\pm 5$. Therefore, there exists some odd prime $q$ that divides $(a-b)(a+b)$ and, hence, $6 \mid c_q(E)$, except possibly when $a=\pm 5$. Thus, combining all the cases we have proved that if $a = \pm p^m$ for some prime $p$ and some $m>0$, then $12 \mid c(E)$ except possibly for the curves that correspond to $(a,b)=(\pm 3, 1), (\pm5, 3)$.

Assume now that $a=\pm 1$. Then $(a-b)(a+b)=a^2-b^2=1-b^2=(1-b)(1+b)$. Since $1-b$ and $1+b$ differ by $2$, they cannot both be a power of $2$, except for $b= 3$. Therefore, there exists some odd prime $q$ that divides $(a-b)(a+b)$, which implies that $6 \mid c_q(E)$, except for $b= 3$. On the other hand, since $b \neq 3$, then $2 \mid c_r(E)$ for some divisor $r$ of $b$. Thus, combining all the cases we have proved that for $a = \pm 1$, then  $12 \mid c(E)$ except possibly for the curve that corresponds to $(a,b)=(\pm 1,  3)$.

Finally, the exceptions $t=\pm 3, \pm \frac{1}{3}, \pm \frac{5}{3}$ correspond to either singular curves or the elliptic curves with Cremona label 30a2 and 90c6, which have Tamagawa number divisible by $12$. This concludes our proof.
\end{proof}

We are now ready to proceed to the proof of Part $(i)$ of Theorem \ref{maintheorem}.

\begin{proof}[Proof of Part $(i)$ Theorem \ref{maintheorem}]
By a Theorem of Mazur (see \cite[Theorem (8)]{maz}) we know that if $E/\mathbb{Q}$ is an elliptic curve over $\mathbb{Q}$, then $E(\mathbb{Q})_{\rm{tors}}$ is isomorphic to
$$\mathbb{Z}/N\mathbb{Z} \text{ for }N=1,2,...,10 \text{ or } \mathbb{Z}/2\mathbb{Z} \oplus \mathbb{Z}/2N\mathbb{Z} \text{ for }N=1,2,3,4.$$

Lorenzini in \cite[Proposition 1.1]{lor} and Byeon, Kim, and Yhee in \cite[Proposition 3.1 and Proposition 3.2]{bky2} have proved the following results.

\begin{theorem}\label{thmlorenzinibyeonkimyhee}
Let $E/\mathbb{Q}$ be a elliptic curve.
\begin{enumerate}
    \item \textup{(Lorenzini)}. If $E(\mathbb{Q})_{\rm{tors}} \in \{ \mathbb{Z}/5\mathbb{Z},  \mathbb{Z}/6\mathbb{Z}, \mathbb{Z}/7\mathbb{Z}, \mathbb{Z}/8\mathbb{Z}, \mathbb{Z}/9\mathbb{Z}, \mathbb{Z}/10\mathbb{Z}, \mathbb{Z}/12\mathbb{Z}, \mathbb{Z}/2\mathbb{Z} \oplus \mathbb{Z}/8\mathbb{Z}\}$, then $|E(\mathbb{Q})_{\rm{tors}}|$ divides $c(E)$ except for the curves with Cremona label 11a3, 14a4, 14a6, and 20a2.
    \item \textup{(Byeon, Kim, and Yhee)}. If $E(\mathbb{Q})_{\rm{tors}} \cong \mathbb{Z}/2\mathbb{Z} \oplus \mathbb{Z}/2\mathbb{Z} $, then $|E(\mathbb{Q})_{\rm{tors}}|$ divides $c(E)$ except for the curves with Cremona label 17a2 and 32a2.
    \item \textup{(Byeon, Kim, and Yhee)}. If $E(\mathbb{Q})_{\rm{tors}} \cong \mathbb{Z}/2\mathbb{Z} \oplus \mathbb{Z}/4\mathbb{Z} $, then $|E(\mathbb{Q})_{\rm{tors}}|$ divides $c(E)$ except for the curve with Cremona label 15a3.
\end{enumerate}
\end{theorem}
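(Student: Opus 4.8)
The statement collects results of Lorenzini \cite{lor} and of Byeon, Kim, and Yhee \cite{bky2}; to reprove it directly, my plan would be to run the same kind of explicit case analysis already used in Propositions \ref{proptorsion4} and \ref{proptorsion2x6}. For each torsion subgroup $T$ appearing in the statement I would begin from a standard universal Weierstrass equation for the elliptic curves $E/\mathbb{Q}$ with $T\hookrightarrow E(\mathbb{Q})_{\rm{tors}}$: a Tate normal form with marked point $(0,0)$ for the cyclic $T$, a cubic model $y^2=(x-e_1)(x-e_2)(x-e_3)$ for $T\cong\mathbb{Z}/2\mathbb{Z}\oplus\mathbb{Z}/2\mathbb{Z}$, and Kubert-type parametrizations for $T\cong\mathbb{Z}/2\mathbb{Z}\oplus\mathbb{Z}/4\mathbb{Z}$ and $T\cong\mathbb{Z}/2\mathbb{Z}\oplus\mathbb{Z}/8\mathbb{Z}$; each family depends on one or two rational parameters. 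Clearing denominators yields an integral model whose discriminant $\Delta$ and invariant $c_4$ are explicit polynomials in the numerators and denominators of the parameters, and in every case $\Delta$ factors as a high power of a few coordinate linear forms (such as $a$, $b$, $a-b$, $a+b$, or $e_i-e_j$) times a small number of irreducible auxiliary forms.

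The engine is Lorenzini's observation connecting the image of a marked torsion point in the component group with divisibility of the Tamagawa number. At a prime $p$ of split multiplicative reduction of type I$_n$, reduction maps $E(\mathbb{Q}_p)$ onto $\Phi_p\cong\mathbb{Z}/n\mathbb{Z}$ with $n=c_p$, and one reads off from the parametrization whether the marked point specializes to the singular point and, if so, its order in $\Phi_p$; at the parameter values (cusps of the relevant modular curve) where that order is largest it is divisible by $|T|$, or by a large divisor of $|T|$ to be complemented by factors of $2$ and $3$ contributed by the other bad primes via Observation 2, exactly as in Propositions \ref{proptorsion4} and \ref{proptorsion2x6}. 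Since each model is minimal at the primes that matter---typically because $\operatorname{ord}_p(c_4)=0$---one reads $c_p$ directly off $\operatorname{ord}_p(\Delta)$ and multiplies the local contributions, obtaining $|T|\mid c(E)$ for all parameter values outside a finite list, where the relevant integers are forced to be $\pm1$ or small prime powers. Enumerating those curves and testing each with SAGE \cite{sagemath} or against Cremona's tables \cite{cremonabook} is how the exceptions 11a3, 14a4, 14a6, 20a2 (the cyclic cases), 17a2, 32a2 (for $\mathbb{Z}/2\mathbb{Z}\oplus\mathbb{Z}/2\mathbb{Z}$), and 15a3 (for $\mathbb{Z}/2\mathbb{Z}\oplus\mathbb{Z}/4\mathbb{Z}$) would surface.

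I expect the main obstacle to be the case $E(\mathbb{Q})_{\rm{tors}}\cong\mathbb{Z}/2\mathbb{Z}\oplus\mathbb{Z}/2\mathbb{Z}$, where $|T|=4$ but every rational point available has order $2$, so that the clean ``marked point hits the singular point'' mechanism cannot by itself force a factor of $4$ at a single multiplicative prime: the component group $\Phi_p$ of a fibre of type I$_n$ has at most one element of order $2$. One must instead use all three nonzero $2$-torsion points simultaneously---equivalently the three factors $e_i-e_j$ of $\Delta=16\prod_{i<j}(e_i-e_j)^2$---distinguish the additive fibres of type I$_0^*$ or I$_n^*$, at which $\Phi_p$ can be $(\mathbb{Z}/2\mathbb{Z})^2$, from the multiplicative ones, and combine the parity constraints at the multiplicative primes with the component structure at the additive primes to conclude $4\mid c(E)$. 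The delicate bookkeeping of precisely when this last step can fail is what isolates 17a2 and 32a2; a similar but lighter analysis of the $\mathbb{Z}/2\mathbb{Z}\oplus\mathbb{Z}/4\mathbb{Z}$ family produces the single exception 15a3.
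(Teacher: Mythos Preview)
The paper does not prove this theorem at all: it is stated as a quotation of \cite[Proposition 1.1]{lor} and \cite[Propositions 3.1 and 3.2]{bky2}, with no argument given. So there is no ``paper's own proof'' to compare your sketch against; your proposal is an outline of how one would reprove the cited results from scratch.

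As an outline, your plan is the right one and matches what those references actually do: Tate normal forms for the cyclic cases, the factored cubic for full $2$-torsion, reading $c_p$ off $\operatorname{ord}_p(\Delta)$ at primes where Observation~1 applies, and catching the finitely many leftover parameter values by direct computation. Your identification of $\mathbb{Z}/2\mathbb{Z}\oplus\mathbb{Z}/2\mathbb{Z}$ as the delicate case is also correct, and the reason you give---a single order-$2$ point cannot by itself force $4\mid c_p$ at a multiplicative prime---is exactly the obstruction Byeon, Kim, and Yhee have to work around.

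One caution: your sketch for that case invokes additive fibres of type $\mathrm{I}_0^*$ or $\mathrm{I}_n^*$ with $\Phi_p\cong(\mathbb{Z}/2\mathbb{Z})^2$ as a source of the missing factor of $4$. For an odd prime $p$ dividing all three differences $e_i-e_j$, after normalizing so that the $e_i$ are not all congruent modulo $p^2$, the curve is not minimal as written, and the actual analysis in \cite{bky2} proceeds differently---they normalize to $y^2=x(x-\alpha)(x-\beta)$ with $\gcd(\alpha,\beta)=1$ and squeeze the required factors of $2$ out of distinct multiplicative primes (and the prime $2$), not out of a single additive prime. So the mechanism you propose for the hard case is not quite the one that works; the bookkeeping you allude to is genuinely more intricate than ``use the $\mathrm{I}_n^*$ component group.'' If you intend to write this out, follow the normalization and case split in \cite[\S3]{bky2} rather than the additive-fibre heuristic.
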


If $E(\mathbb{Q})_{\rm{tors}} \ncong \mathbb{Z}/4\mathbb{Z}, \mathbb{Z}/2\mathbb{Z}\oplus \mathbb{Z}/6\mathbb{Z}$ then Theorem \ref{thmlorenzinibyeonkimyhee} shows $|E(\mathbb{Q})_{\rm{tors}}|$ divides $c(E)$ except for the curves with Cremona label 11a3, 14a4, 14a6, 15a3, 17a2, 20a2, and 32a2. Moreover, Proposition \ref{proptorsion2x6} implies that if $E(\mathbb{Q})_{\rm{tors}} \cong \mathbb{Z}/2\mathbb{Z}\oplus \mathbb{Z}/6\mathbb{Z}$, then $|E(\mathbb{Q})_{\rm{tors}}|$ divides $c(E)$. Finally, if $E(\mathbb{Q})_{\rm{tors}} \cong \mathbb{Z}/4\mathbb{Z}$, then using Proposition \ref{proptorsion4}, we find that $E(\mathbb{Q})_{\rm{tors}}$ divides $c(E) \cdot c_\infty (E)$, except for the curves with Cremona \cite{cremonabook} labels 15a7, 15a8, 17a4, 21a4, and 24a4. This proves Part $(i)$ of Theorem \ref{maintheorem}.

\end{proof}

Before proceed to the proof of Part $(ii)$ of Theorem \ref{maintheorem} we need the following Propositions.

\begin{proposition}\label{prop2torsionsemistable}
Let $E/\mathbb{Q}$ be a semi-stable elliptic curve with $E(\mathbb{Q})_{\rm{tors}} \cong \mathbb{Z}/2\mathbb{Z}$. Then $|E(\mathbb{Q})_{\rm{tors}}|$ divides $c_{\infty}(E) \cdot c(E)$, except for the curves with Cremona labels 15a8, 39a4, and 55a4.
\end{proposition}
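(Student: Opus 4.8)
The plan is to argue via an explicit Weierstrass model. Since $E/\mathbb{Q}$ has a rational point of order $2$, it can be written as $E\colon y^2=x^3+ax^2+bx$ with $a,b\in\mathbb{Z}$ and the point of order $2$ equal to $(0,0)$; replacing $(x,y)$ by $(p^2x,p^3y)$ whenever possible, we may assume that there is no prime $p$ with $p^2\mid a$ and $p^4\mid b$, so that this model is minimal at every odd prime. Its discriminant is $\Delta=16b^2(a^2-4b)$. If $\Delta>0$ then $c_\infty(E)=2$ and there is nothing to prove, so I would assume $\Delta<0$; this forces $a^2-4b<0$, hence $b\ge 1$.

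Next I would exploit semi-stability. For an odd prime $p$ we cannot have $p\mid\gcd(a,b)$: otherwise the (minimal) reduction of $E$ modulo $p$ would be the cuspidal cubic $y^2=x^3$, i.e.\ $E$ would have additive reduction at $p$. Hence, if an odd prime $p$ divides $b$, then $p\nmid a^2-4b$, so $\operatorname{ord}_p(\Delta)=2\operatorname{ord}_p(b)$ is even and positive; since the model is minimal at $p$ and $E$ has multiplicative reduction there, Observations~1 and~2 give $2\mid c_p(E)$, and the proposition holds for $E$. Thus I may assume $b=2^{j}$ for some $j\ge 0$. In the same way, if an odd prime $p$ divides $a^2-4b$ to an even power then $\operatorname{ord}_p(\Delta)=\operatorname{ord}_p(a^2-4b)$ is even and positive, and again $2\mid c_p(E)$; so I may also assume that every odd prime factor of $a^2-4b$ occurs in it to an odd power.

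The core of the argument — and the step I expect to cause the most trouble — is now to run Tate's algorithm at $p=2$ on the family $y^2=x^3+ax^2+2^{j}x$, organised according to the parity of $a$ and the size of $j$. When $a$ is even one checks, after passing to a minimal model at $2$, that $E$ has additive reduction at $2$ except for a few small pairs $(a,j)$ (which turn out to be singular or to define curves that are not semi-stable); this contradicts our hypothesis, so $a$ must be odd. When $a$ is odd the displayed model is non-minimal at $2$ precisely when $j\ge 4$; for $j\ge 5$ the minimal model still has $\operatorname{ord}_2(\Delta_{\min})$ even and positive, whence $2\mid c_2(E)$ and we are done, while for $j=4$ (that is, $b=16$) the minimal model acquires good reduction at $2$. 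Since $\Delta<0$ forces $a^2<64$, we are reduced to the finitely many curves with $b=16$ and $a\in\{\pm1,\pm3,\pm5,\pm7\}$ (the case $a=\pm1$ being already covered, since then $a^2-4b=-63$ has $3$ to an even power). Running this short list, together with the small leftover cases from the even-$a$ analysis, through \cite{sagemath} and discarding the curves whose torsion subgroup is not isomorphic to $\mathbb{Z}/2\mathbb{Z}$ or which are not semi-stable, what remains are precisely the curves with Cremona \cite{cremonabook} labels 15a8, 39a4, and 55a4, for each of which $c_\infty(E)\cdot c(E)=1$.

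The delicate point throughout is the analysis at $2$: deciding when the model $y^2=x^3+ax^2+2^{j}x$ is minimal at $2$ — equivalently, when the candidate reduced triple $(c_4,c_6,\Delta)/(2^4,2^6,2^{12})$ arises from an integral Weierstrass model, a question governed by the residues of $c_4$ and $c_6$ modulo small powers of $2$ — and then reading off the Kodaira type. Everything away from $2$ follows at once from Observations~1 and~2 together with the identity $c_p(E)=\operatorname{ord}_p(\Delta_{\min})$ for split multiplicative reduction of type $\mathrm{I}_n$.
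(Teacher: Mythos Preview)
Your overall strategy coincides with the paper's: write $E$ as $y^2=x^3+ax^2+bx$, use the sign of the discriminant to handle $c_\infty(E)$, dispose of odd primes dividing $b$ via the even exponent in $\Delta$, reduce to $b$ a power of $2$, and finish with a finite search. The substantive difference is the treatment at $p=2$. The paper invokes a lemma of Mestre--Oesterl\'e which, for a semi-stable curve with a rational $2$-torsion point, produces this model with $\gcd(a,b)=1$ \emph{and} gives the minimal discriminant explicitly as $\Delta'=b^2(a^2-4b)/2^{8}$. This single citation replaces your entire proposed Tate-algorithm analysis at $2$: one reads off immediately that $\operatorname{ord}_2(\Delta')=2u-8$ is even and positive once $u\ge 5$, and that only $0\le u\le 4$ remains for the finite check.

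Your direct approach is viable but, as written, has two unclosed steps. First, after asserting that for $a$ odd the model is minimal at $2$ exactly when $j\le 3$, you never say what happens in those cases; you jump straight to $j=4$ and $j\ge 5$. The missing sentence is that for $a$ odd and $j\le 3$ one has $\operatorname{ord}_2(\Delta)=4+2j\le 10<12$ and $\operatorname{ord}_2(c_4)\ge 4$, so the (minimal) reduction at $2$ is additive, contradicting semi-stability. Second, your claim that for $j\ge 4$ the model \emph{is} non-minimal and that the minimized model has the stated $2$-adic discriminant valuation requires verifying the integrality (Kraus-type) conditions you allude to; you flag this as ``the delicate point'' but do not carry it out. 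The Mestre--Oesterl\'e lemma is precisely what packages this verification, and citing it would let you delete the whole $2$-adic case analysis and arrive directly at the same finite list the paper checks (with $b\in\{1,2,4,8,16\}$ rather than only $b=16$, the extra values being eliminated by SAGE rather than by your implicit additive-reduction argument).
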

\begin{proof}
 Let $E/\mathbb{Q}$ be a semi-stable elliptic curve with $E(\mathbb{Q})_{\rm{tors}} \cong \mathbb{Z}/2\mathbb{Z}$. By \cite[Lemme 1]{mestreoesterle1989} the curve $E/\mathbb{Q}$ has a Weierstrass equation of the form $$y^2=x^3+ax^2+bx,$$ with $a,b \in \mathbb{Z}$ and $a,b$ coprime. This equation is minimal outside $2$ (see \cite[Page 176]{mestreoesterle1989}), with discriminant $$\Delta=16b^2(a^2-4b) \text{ and } c_4=16(a^2-3b).$$ Moreover, the minimal discriminant of $E/\mathbb{Q}$ is $\Delta'=\frac{1}{2^8}b^2(a^2-4b)$.
 
 If $p$ is an odd prime with $p \mid b$, then since ord$_p(c_4)=0$ and ord$_p(\Delta)$ is even, the curve $E/\mathbb{Q}$ has multiplicative reduction modulo $p$ with $2 \mid c_p(E)$. Therefore, we can assume from now on that $b=\pm 2^u$, for some $u \geq 0$. Moreover, if $u \geq 5$, then ord$_2(\Delta')>0$ and it is even and, hence, $E/\mathbb{Q}$ has multiplicative reduction modulo $2$ with $2 \mid c_2(E)$. Consequently, we can assume that $4 \geq u \geq 0$.
 
 Now, if $a^2-4b>0$, then $c_{\infty}(E)=2$. So, we can also assume that $a^2-4b<0$. This implies that $b>0$ and as a result the possible options for $b$ are $1,2,4,8,$ or $16$. Moreover, since we require that $a^2-4b<0$ and we have that $a^2 \geq 0$, we obtain only finitely many possibilities for $a$. After computations we find that $a$ can only be equal to $ 0, \pm 1,\pm 3, \pm 5, \pm 7$. Using SAGE and computing the curves corresponding to the above choices of $a$ and $b$ we find the exceptions with Cremona labels 15a8, 39a4, 55a4.
\end{proof}

\begin{proposition}\label{2torsionprimeofadditivereduction}
Let $E/\mathbb{Q}$ be an elliptic curve with $E(\mathbb{Q})_{\rm{tors}} \cong \mathbb{Z}/2\mathbb{Z}$ and such that it has a prime $p \neq 2$ of additive reduction. Then $|E(\mathbb{Q})_{\rm{tors}}|$ divides $c_p(E)$.
\end{proposition}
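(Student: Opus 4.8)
The plan is to recast the statement as a purely local computation at $p$ and to use the structure of $E(\mathbb{Q}_p)$ for an elliptic curve with additive reduction at an odd prime. Recall that a minimal Weierstrass model of $E/\mathbb{Q}_p$ gives the chain $E_1(\mathbb{Q}_p) \subseteq E_0(\mathbb{Q}_p) \subseteq E(\mathbb{Q}_p)$, where $E_0(\mathbb{Q}_p)$ is the group of points with nonsingular reduction, $E_1(\mathbb{Q}_p)$ is the kernel of reduction, and there are exact sequences
\begin{align*}
0 \to E_1(\mathbb{Q}_p) \to E_0(\mathbb{Q}_p) \to \widetilde{E}_{\mathrm{ns}}(\mathbb{F}_p) \to 0
\quad\text{and}\quad
0 \to E_0(\mathbb{Q}_p) \to E(\mathbb{Q}_p) \to \Phi \to 0,
\end{align*}
where $\Phi$ is the component group of the N\'eron model and $|\Phi| = c_p(E)$ (see \cite[Chapter VII]{aec} and \cite[Chapter IV]{silverman2}). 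The strategy is to show that $E_0(\mathbb{Q}_p)$ contains no point of order $2$; once this is known, the nontrivial rational $2$-torsion point of $E$, regarded as a point of $E(\mathbb{Q}_p)$, must map to a point of order $2$ in $\Phi$, and hence $2 = |E(\mathbb{Q})_{\mathrm{tors}}|$ divides $|\Phi| = c_p(E)$.

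The steps I would carry out are as follows. First, $E_1(\mathbb{Q}_p)$ is the group of points of the formal group $\widehat{E}(p\mathbb{Z}_p)$, hence a pro-$p$ group; as $p \neq 2$ it therefore has no nontrivial $2$-torsion (for $p \geq 3$ the formal logarithm even gives $E_1(\mathbb{Q}_p) \cong (\mathbb{Z}_p,+)$). Second, because $E/\mathbb{Q}_p$ has additive reduction, $\widetilde{E}_{\mathrm{ns}}(\mathbb{F}_p) \cong (\mathbb{F}_p,+)$ has order $p$, which is odd. Third, combining these with the first exact sequence: a point of $E_0(\mathbb{Q}_p)$ of order $2$ would have trivial image in $\widetilde{E}_{\mathrm{ns}}(\mathbb{F}_p)$, hence would lie in $E_1(\mathbb{Q}_p)$, hence would be trivial; so $E_0(\mathbb{Q}_p)$ contains no point of order $2$. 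Fourth, letting $T$ be the nontrivial element of $E(\mathbb{Q})_{\mathrm{tors}} \cong \mathbb{Z}/2\mathbb{Z}$, its image in $E(\mathbb{Q}_p)$ still has order $2$, so $T \notin E_0(\mathbb{Q}_p)$ and its class in $\Phi$ is a nontrivial element annihilated by $2$; thus $2 \mid |\Phi| = c_p(E)$, as desired.

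I do not expect a genuine obstacle here: the proof merely assembles standard facts about N\'eron models and minimal Weierstrass equations over a local field, and needs no explicit equation for $E$. The point deserving emphasis is that the hypothesis $p \neq 2$ is essential, since $E_1(\mathbb{Q}_2) = \widehat{E}(2\mathbb{Z}_2)$ can itself contain a point of order $2$, so the third step fails at $p = 2$; this is exactly the behaviour exhibited by Example \ref{examplesection2}, which shows the reduction hypothesis in Part $(ii)$ of Theorem \ref{maintheorem} cannot be removed. A more computational route, in the style of the rest of Section \ref{section2}, would instead run Tate's algorithm on a model $y^2 = x^3 + ax^2 + bx$ and check, type by type via the tables of \cite{pap}, that every additive Kodaira type occurring at an odd prime under the presence of a rational $2$-torsion point has even Tamagawa number; the component-group argument above renders this enumeration unnecessary.
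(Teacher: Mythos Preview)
Your proposal is correct and follows essentially the same argument as the paper: both use the filtration $E_1(\mathbb{Q}_p)\subseteq E_0(\mathbb{Q}_p)\subseteq E(\mathbb{Q}_p)$, observe that $E_1(\mathbb{Q}_p)$ has no $2$-torsion since $p\neq 2$ and that $\widetilde{E}_{\mathrm{ns}}(\mathbb{F}_p)\cong(\mathbb{F}_p,+)$ has no $2$-torsion under additive reduction, and conclude that the rational point of order $2$ must lie outside $E_0(\mathbb{Q}_p)$, forcing $2\mid c_p(E)$. The only cosmetic difference is that the paper phrases the key step as a proof by contradiction while you state directly that $E_0(\mathbb{Q}_p)$ has no $2$-torsion.
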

\begin{proof}
Suppose that $E/\mathbb{Q}$ is given by a minimal Weierstrass equation at $p$ and let $\widetilde{E}_{\rm{ns}}(\mathbb{F}_p)$ be the group of nonsigular points of the reduction of $E/\mathbb{Q}$ at $p$. There is a short exact sequence $$0 \longrightarrow E_1(\mathbb{Q}_p) \longrightarrow E_0(\mathbb{Q}_p) \longrightarrow \widetilde{E}_{\rm{ns}}(\mathbb{F}_p)\longrightarrow 0,$$
 where $E_0(\mathbb{Q}_p)$ consists of all points with nonsingular reduction and $E_1(\mathbb{Q}_p)$ is the kernel of the reduction map. Let $P$ be a $\mathbb{Q}$-rational point of order $2$. We  claim that $P \in E(\mathbb{Q}_p)/ E_0(\mathbb{Q}_p)$, so $2 \mid c_p(E)$ because $c_p(E)= |E(\mathbb{Q}_p)/ E_0(\mathbb{Q}_p)|$. Assume that $P \in E_0(\mathbb{Q}_p)$ and we will find a contradiction. Since $p \neq 2$, it follows that $E_1(\mathbb{Q}_p)$ cannot have points of order $2$ (see \cite[Proposition VII.3.1]{aec}) so $P \not\in E_1(\mathbb{Q}_p)$. This implies that the point $P$ reduces to a point of order $2$ in $\widetilde{E}_{\rm{ns}}(\mathbb{F}_p)$. This is a contradiction because $E/\mathbb{Q}$ has additive reduction modulo $p$ and, hence, $\widetilde{E}_{\rm{ns}}(\mathbb{F}_p) \cong (\mathbb{F}_p,+)$, which does not contain points of order $2$ because $p\neq 2$.
\end{proof}

\begin{proof}[Proof of Part $(ii)$ of Theorem \ref{maintheorem}]
If $E/\mathbb{Q}$ is semi-stable away from $2$, then by our assumption $E/\mathbb{Q}$ is semi-stable and, hence, Proposition \ref{prop2torsionsemistable} implies that $|E(\mathbb{Q})_{\rm{tors}}|$ divides $c_{\infty}(E)\cdot c(E)$, except for the curves with Cremona labels 15a8, 39a4, and 55a4. If $E/\mathbb{Q}$ is not semi-stable away from $2$, then it has a prime $p$ of additive reduction. Using Proposition \ref{2torsionprimeofadditivereduction} we obtain that $|E(\mathbb{Q})_{\rm{tors}}|$ divides $c_p(E)$. This proves Part $(ii)$ of Theorem \ref{maintheorem}.
\end{proof}

\begin{example}\label{examplesection2}
Let  $E/\mathbb{Q}$ be the elliptic curve with Cremona \cite{cremonabook} label 48a4 (LMFDB \cite{lmfdb} label 48.a5). Then $E(\mathbb{Q}) \cong \mathbb{Z}/2\mathbb{Z}$, $|\Sha(E/\mathbb{Q})|=1$, and $c_{\infty}(E) \cdot c(E)=1$. Therefore, $|E(\mathbb{Q})_{\rm{tors}}|$ does not divide $|\Sha(E/\mathbb{Q})|\cdot c_{\infty}(E) \cdot c(E)$. Note however, that $E/\mathbb{Q}$ is not optimal and has Manin constant equal to $2$.
\end{example}

\section{Case $E(\mathbb{Q})_{\rm{tors}} \cong \mathbb{Z}/3\mathbb{Z}$}\label{section3}

In this section we prove Part $(iii)$ of Theorem \ref{maintheorem} using ideas from \cite{mentzelosagasheconjecture}. First we recall some background material on reduction types of elliptic curves. We refer the reader to \cite[Chapter IV]{silverman2} for more information. If $E/\mathbb{Q}$ is an elliptic curve and $p$ is a prime, then we denote by $\mathcal{E}^{min}/\mathbb{Q}_p^{unr}$ the minimal proper regular model of $E/\mathbb{Q}$ over the maximal unramified extension $\mathbb{Q}_p^{unr}$ of $\mathbb{Q}_p$. There is a classification of the different configurations for the special fiber of $\mathcal{E}^{min}/\mathbb{Q}_p^{unr}$ by work Kodaira and N\'eron. More precisely, the special fiber of $\mathcal{E}^{min}/\mathbb{Q}_p^{unr}$ belongs to one of the following types; II, II$^*$, III, III$^*$, IV, IV$^*$, I$_n$ for $n \in \mathbb{Z}$ with $n \geq 0$, or I$_n^*$ for $n \in \mathbb{Z}$ with $n \geq 0$ (see \cite[Theorem 8.2]{silverman2} for the geometric meaning of those types). If $\mathcal{E}^{min}/\mathbb{Q}_p^{unr}$ is of type $T$ that belongs in the list above, then we will say that $E/\mathbb{Q}$ has reduction type $T$ modulo $p$.

We now restate Part $(iii)$ of Theorem \ref{maintheorem} for the convenience of the reader.

\begin{theorem}\label{exceptionlist}
Let $E/\mathbb{Q}$ be an elliptic curve with a $\mathbb{Q}$-rational point of order $3$ and $j$-invariant $j_E \neq 0,1728$. Assume that the analytic rank of $E/\mathbb{Q}$ is $0$. Then $3$ divides $c(E)\cdot |\Sha(E/\mathbb{Q})|$, except possibly if $E/\mathbb{Q}$ is an elliptic curve which satisfies either $(a)$ or $(b)$ below.
\begin{enumerate}[label=(\alph*)]
    \item $E/\mathbb{Q}$ is semi-stable away from 3, has at most one place of split multiplicative reduction, and the local root number $w_3(E)$ of $E/\mathbb{Q}$ at $3$ is equal to $1$.
    \item $E/\mathbb{Q}$ is semi-stable away from 3 and has reduction type II or IV modulo $3$.
\end{enumerate}
\end{theorem}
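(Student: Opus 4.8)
The plan is to combine a case-by-case study of the Tamagawa numbers at the bad primes with a $3$-isogeny descent, using the hypothesis on the analytic rank (through the functional equation) to treat the residual cases.

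\emph{Step 1: normal form and the bad primes $p\neq 3$.} A curve $E/\mathbb{Q}$ with a rational point of order $3$ can be written as $y^2+axy+by=x^3$ with $a,b\in\mathbb{Z}$, the point $(0,0)$ of order $3$, discriminant $\Delta=b^3(a^3-27b)$ and $c_4=a(a^3-24b)$; the hypothesis $j_E\neq 0,1728$ excludes the degenerate pairs $(a,b)$ (i.e.\ $a=0$ and $c_6=0$). The first step is the order-$3$ analog of Proposition~\ref{2torsionprimeofadditivereduction}: if $p\neq 3$ and $E/\mathbb{Q}_p$ has additive reduction, then neither the formal group $E_1(\mathbb{Q}_p)$ nor $\widetilde E_{\mathrm{ns}}(\mathbb{F}_p)\cong(\mathbb{F}_p,+)$ has a point of order $3$, so the rational $3$-torsion point cannot lie in $E_0(\mathbb{Q}_p)$ and hence maps to an element of order $3$ in the component group; thus $3\mid c_p(E)$. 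In the same spirit, if $p\neq 3$ and $p\mid b$ in a $p$-minimal equation of this shape, then either $p\mid a$ and the reduction is additive, or $p\nmid a$, the reduction is split multiplicative of type $\mathrm I_{3v_p(b)}$ with $(0,0)$ the node, and again $3\mid c_p(E)$. Consequently, if $3\nmid c(E)$ then $E/\mathbb{Q}$ is semistable away from $3$, its minimal model has $b=\pm 3^k$, and every remaining bad prime $p\neq 3$ is multiplicative with $(0,0)\in E_0(\mathbb{Q}_p)$, i.e.\ $p\mid a^3-27b$ and (for split $p$) $c_p(E)=v_p(a^3-27b)$.

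\emph{Step 2: the prime $3$.} Still under $3\nmid c(E)$, I would run Tate's algorithm at $p=3$ on $y^2+axy\pm 3^ky=x^3$. This confines the reduction type modulo $3$ to: good reduction; multiplicative $\mathrm I_n$ with $3\nmid n$; or an additive type whose component group has order prime to $3$, which (here) leaves only types II, III, $\mathrm I^*_0$, IV, and IV only when the torsion point lands in the identity component. Combined with the description of the bad primes $p\neq 3$ from Step~1, this already reveals the shape of the two exceptional families: a curve semistable away from $3$, whose only split multiplicative prime $q$ (if any) has $3\nmid v_q(a^3-27b)$, and whose reduction type at $3$ is innocuous for $c_3$ — this is where (a) and (b) sit.

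\emph{Step 3: $3$-isogeny descent and the functional equation.} For the curves with $3\nmid c(E)$ I would use the $3$-isogeny $\phi\colon E\to E':=E/\langle(0,0)\rangle$; its dual $\widehat\phi$ has kernel isomorphic to $\mu_3$ as a Galois module, and the target has discriminant $b(a^3-27b)^3$, so $E$ and $E'$ have the same bad primes. Since they are isogenous, $L(E,1)=L(E',1)\neq 0$, and Cassels' unconditional isogeny-invariance of the Birch--Swinnerton-Dyer quotient gives
\[
\#\Sha(E/\mathbb{Q})\cdot\prod_v c_v(E)=\#\Sha(E'/\mathbb{Q})\cdot\prod_v c_v(E')\cdot\frac{\#E(\mathbb{Q})_{\mathrm{tors}}^2}{\#E'(\mathbb{Q})_{\mathrm{tors}}^2}\cdot\frac{\Omega_{E'}}{\Omega_E}.
\]
Here $v_3(\#E(\mathbb{Q})_{\mathrm{tors}})=1$, while the $3$-adic valuation of $\Omega_{E'}/\Omega_E$ lies in $\{-1,0\}$: the pullback constants of the Néron differentials along $\phi$ and $\widehat\phi$ are positive integers multiplying to $3$, and $\widehat\phi$ is injective on real points because complex conjugation inverts $\ker\widehat\phi\cong\mu_3$. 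Taking $v_3$, if $E'(\mathbb{Q})$ has no point of order $3$ the right-hand side has $v_3\geq 2-0-1=1$, so $3\mid\#\Sha(E/\mathbb{Q})\cdot c(E)$ and we are done. When $E'(\mathbb{Q})$ also has a rational point of order $3$ (a special situation), the valuation estimate is inconclusive, and I would instead analyze the $\phi$- and $\widehat\phi$-descent exact sequences place by place: the only delicate local Selmer conditions are at $p=3$ (types II and IV make the condition degenerate) and at a split multiplicative prime (which contributes one dimension), and the constraint $\prod_v w_v(E)=+1$, valid since the analytic rank is $0$, reads — for a curve semistable away from $3$ — as $w_3(E)=(-1)^{1+s}$ with $s$ the number of split multiplicative primes. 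Tracking these dimensions produces a nonzero element of $\Sha(E/\mathbb{Q})[\phi]$, hence $3\mid\#\Sha(E/\mathbb{Q})$, except exactly when $E$ falls into case (a) (a single split multiplicative prime, forcing $w_3(E)=1$, with the remaining local data consistent with $\Sha(E)[\phi]=0$) or case (b) (reduction type II or IV at $3$, where the local condition at $3$ contributes nothing).

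\emph{Expected main obstacle.} The first two steps are bookkeeping with Tate's algorithm and the explicit parametrization. The crux is the last case of Step~3: when $E$ and its $3$-isogenous curve both carry a rational $3$-torsion point, isogeny-invariance of the BSD quotient no longer forces $\Sha(E)[3]\neq 0$, and one must extract the divisibility from the precise local $\phi$-Selmer conditions at $3$ and at the (at most one) split multiplicative prime, controlled through the global root number; it is the failure of exactly this argument — for types II, IV at $3$, and for the one-split-multiplicative-prime configuration — that produces the two genuinely exceptional families (a) and (b), which by Example~\ref{examplesection3} cannot be removed.
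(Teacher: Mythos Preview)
Your Steps~1--2 and the overall $3$-isogeny/root-number strategy match the paper's. The gap is in Step~3: the case you handle cleanly --- where $E'=E/\langle(0,0)\rangle$ has no rational point of order~$3$ --- is vacuous. Once $b=1$, Hadano's formula (used in the paper) gives
\[
\widehat E:\quad y^2+(a+6)xy+(a^2+3a+9)y=x^3,
\]
and on any curve of the shape $y^2+Axy+By=x^3$ with $B\neq 0$ the point $(0,0)$ has order~$3$; since $a^2+3a+9>0$ this always applies. Hence $\widehat E(\mathbb{Q})$ \emph{always} carries a rational $3$-torsion point, your Cassels estimate degenerates to $v_3(\#\Sha(E)\cdot c(E))\geq 2-2-1=-1$, and the whole argument collapses onto what you call the ``special situation,'' which you only sketch.

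The paper avoids this case split entirely by tracking $v_3\bigl(\prod_p c_p(\widehat E)/\prod_p c_p(E)\bigr)$ rather than the torsion ratio. With $b=1$ one has $\widehat\Delta=(a^3-27)^3=\Delta^3$, so at every split multiplicative prime $p\neq 3$ the local ratio $c_p(\widehat E)/c_p(E)$ contributes exactly $+1$ to $v_3$; at nonsplit primes, and (away from types II, IV, i.e.\ outside exception~(b)) at $p=3$, it contributes $0$. An elementary factorization of $a^2+3a+9$ shows that $\widehat E$, hence $E$, has at least one split multiplicative prime $q\neq 3$ (apart from three named curves, all lying in exception~(b)). The root-number identity $\prod_v w_v(E)=+1$ then does the counting: if $w_3(E)=-1$ the number of split primes is even, hence $\geq 2$; if $w_3(E)=+1$ one either has $\geq 2$ split primes or lands in exception~(a). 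In every remaining case $v_3\bigl(\prod_p c_p(\widehat E)/\prod_p c_p(E)\bigr)\geq 2$, and a lemma quoted from the author's earlier paper converts this directly into $9\mid|\Sha(E/\mathbb{Q})|$. This prime-by-prime Tamagawa-ratio bookkeeping is the concrete mechanism your proposal is missing.
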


Before we proceed to our proof we give a brief overview so that it is easier for the reader to follow. The idea is that we first examine when $3$ divides $c(E)$ using the reduction properties of $E/\mathbb{Q}$. When this divisibility fails we prove that $9$ divides $|\Sha(E/\mathbb{Q})|$. More precisely, the strategy of the proof is as follows. Since $E/\mathbb{Q}$ has a $\mathbb{Q}$-rational point of order $3$ we can find a very simple Weierstrass equation for $E/\mathbb{Q}$. Invoking results of Kozuma in Proposition \ref{prop:3torsionreduction} we find, in Lemma \ref{lemmatamagawanumber} and the paragraph after it, that if $3$ does not divide $c(E)$, then $E/\mathbb{Q}$ must be of a very specific form. For the case where $3$ does not divide $c(E)$, we need to show that $9$ divides $|\Sha(E/\mathbb{Q})|$. We achieve this using Lemma \ref{tamagawagreaterorequalto2}, which takes advantage of an isogeny $\phi:  E \rightarrow{} \widehat{E}$ coming form the point of order $3$. However, in order to apply Lemma \ref{tamagawagreaterorequalto2} we need to prove that $\text{ord}_3(\frac{\prod_{p}c_p(\widehat{E})}{\prod_{p}c_p(E)}) \geq 2$. This is achieved in Claim \ref{lastclaimproof} using Claim \ref{splitmultiplicativereduction}.

\begin{proof}

Since the analytic rank of $E/\mathbb{Q}$ is zero, work of Gross and Zagier, on heights of Heegner points \cite{grosszagierpaper}, as well as work of Kolyvagin, on Euler systems \cite{kolyvagineulersystems}, imply that  $E/\mathbb{Q}$ has (algebraic) rank $0$ and that $\Sha(E/\mathbb{Q})$ is finite (see \cite[Theorem $3.22$]{darmonmodularellipticcurves} for a sketch of the proof). Thus the statement of Theorem \ref{exceptionlist} makes sense.

If $E/\mathbb{Q}$ be an elliptic curve with a $\mathbb{Q}$-rational point $P$ of order $3$, then by translating $P$ to $(0,0)$ we find a Weierstrass equation of the form $$y^2+cxy+dy=x^3,$$ with $c,d \in \mathbb{Q}$ (see \cite[Remark $2.2$ in Section $4.2$]{hus}). If $u \in \mathbb{Z}$, then the transformation $(x,y) \rightarrow{(\frac{x}{u^2}, \frac{y}{u^3})}$ gives a new Weierstrass equation of the same form with $c$ replaced by $uc$ and with $d$ replaced by $u^3d$ (see \cite[Page 185]{aec}). Therefore, by picking $u$ to be the product of appropriate powers of the primes appearing in the denominators of $c,d$ (if any), we can arrange that $c,d \in \mathbb{Z}$. Moreover, by using the transformation $(x,y) \rightarrow{(x, -y)}$ if necessary, we can arrange that $d > 0$. We now show that we can find a new Weierstrass equation of the above form with coefficients, which we will call $a$ and $b$ below, such that for every prime $q$ either $q \nmid a$ or $q^3 \nmid b$. If there is no prime $q$ such that $q \mid c$ and $q^3 \mid d$, then set $a=c$ and $b=d$. Otherwise, let $q_1,...,q_s$ be the set of primes such that $q_i \mid c$ and ${q_i}^3 \mid d$, and let $n_i= \mathrm{min}\{ \mathrm{ord}_{q_i}(c), \lfloor \frac{\mathrm{ord}_{q_i}(d)}{3} \rfloor \} $, where $\lfloor -  \rfloor$ is the floor function. If $u=\prod_{i=1}^{s} q_i^{n_i}$, then using the transformation $(x,y) \rightarrow{(u^2x, u^3y)}$ we obtain a new equation of the form $$y^2+\frac{c}{u}xy+\frac{d}{u^3}y=x^3.$$ Setting $a=\frac{c}{u}$ and $b=\frac{d}{u^3}$ we see that $a,b \in \mathbb{Z}$, $b >0$, and for every prime  $q$ either $q \nmid a$ or $q^3 \nmid b$. Therefore, we have proved that we can choose a Weierstrass equation for $E/\mathbb{Q}$ of the form
\begin{align}\label{eq:3torsion}
    y^2+axy+by=x^3,
\end{align}
where $a,b$ are integers, $b>0$, and for every prime $q$ either $q \nmid a$ or $q^3 \nmid b$. Also, we must have $a^3-27b \neq 0$, since the discriminant of Equation $($\ref{eq:3torsion}$)$ is $$\Delta=b^3(a^3-27b) \quad \text{and we also have} \quad c_4=a(a^3-24b).$$

The following proposition will be used in Lemma \ref{lemmatamagawanumber} below. The arrows in each part point towards the reduction type of $E/\mathbb{Q}$ at the prime $p$.
\begin{proposition}\label{prop:3torsionreduction} (See \cite[Proposition 3.5 and Lemma 3.6]{koz})
Let $E/\mathbb{Q}$ be an elliptic curve given by Equation $($\ref{eq:3torsion}$)$. Write $D:=a^3-27b$ and let $p$ be any prime (note that either $p \nmid a$ or $p^3 \nmid b$). Then the reduction of $E/\mathbb{Q}$ modulo $p$ is determined as follows: 
\begin{enumerate}
    \item $\text{If} \; 3ord_p(a) \leq ord_p(b)$ :  $\begin{cases} 
      3ord_p(a)< ord_p(b) \rightarrow \text{split I}_{3ord_p(b)}, \quad c_p(E)=3ord_p(b) \\
      3ord_p(a)= ord_p(b) \rightarrow \begin{cases} 
      ord_p(D)>0 \rightarrow \text{I}_{ord_p(D)}\\
      ord_p(D)=0 \rightarrow \text{Good reduction I}_0
   \end{cases}
\end{cases}$

    \item $\text{If} \; 3ord_p(a) > ord_p(b)$ :  $\begin{cases} 
      ord_p(b) = 0 \rightarrow \begin{cases} 
      p=3 \rightarrow \text{Go to} \; (iii) \\
      p \neq 3 \rightarrow \text{Good reduction I}_0
   \end{cases} \\
      ord_p(b) = 1  \rightarrow \text{IV, } c_p(E)=3\\
       ord_p(b) = 2 \rightarrow \text{IV}^*,\text{ } c_p(E)=3.
\end{cases}$

   \item If $p=3$ and $ord_p(a)>0=ord_p(b)$: $ord_p(D)=$ $\begin{cases}
3 \rightarrow \text{II or III}\\
4 \rightarrow \text{II}\\
5 \rightarrow \text{IV}\\
n \rightarrow \text{I}_{n-6}^* \text{  , for  } n \geq 6.
\end{cases}$

\end{enumerate}
\end{proposition}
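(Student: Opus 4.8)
The plan is to prove the proposition by running Tate's algorithm on the Weierstrass equation \eqref{eq:3torsion} one prime at a time, organizing the work around the two integers $\alpha:=\mathrm{ord}_p(a)$ and $\beta:=\mathrm{ord}_p(b)$. The normalization built into \eqref{eq:3torsion} is precisely the statement that $\min\{\alpha,\lfloor\beta/3\rfloor\}=0$, i.e.\ that $\alpha=0$ whenever $\beta\geq 3$; this is what keeps the case list finite. I would also use throughout the standard quantities $b_2=a^2$, $b_4=ab$, $b_6=b^2$, $b_8=0$, together with $c_4=a(a^3-24b)$, $\Delta=b^3D$ where $D=a^3-27b$, and the identity $a^3-24b=D+3b$.

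First I would dispose of the case $\alpha=0$, which yields part (i). If moreover $p\mid b$ (so $\beta\geq 1$), the reduction of \eqref{eq:3torsion} is $y(y+ax)=x^3$, a curve with a node at the origin whose tangent slopes $0$ and $-a$ both lie in $\mathbb{F}_p$; since $p\nmid D$ here, $\mathrm{ord}_p(\Delta)=3\beta$, and one reads off split $\mathrm{I}_{3\beta}$ with $c_p=3\,\mathrm{ord}_p(b)$. If instead $p\nmid b$ as well, then $\mathrm{ord}_p(\Delta)=\mathrm{ord}_p(D)$, so the reduction is good when $p\nmid D$; and when $p\mid D$ one checks $p\nmid c_4$ via $c_4=a(D+3b)$ — separating $p=3$ from $p\neq 3$ in the factor $3b$ — which forces multiplicative reduction of type $\mathrm{I}_{\mathrm{ord}_p(D)}$.

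Next suppose $\alpha\geq 1$, so by the normalization either $\beta=0$ or $\beta\in\{1,2\}$. When $\beta\in\{1,2\}$ the reduction is the cuspidal cubic $y^2=x^3$, and Tate's algorithm applied to $[a_1,a_2,a_3,a_4,a_6]=[a,0,b,0,0]$ runs quickly because $a_2=a_4=a_6=0$: for $\beta=1$ one stops at the type-$\mathrm{IV}$ step, since $\mathrm{ord}_p(b_6)=2<3$, and the auxiliary equation factors (up to units) as $Y(Y+b/p)$, which has two distinct roots in $\mathbb{F}_p$, so $c_p=3$; for $\beta=2$ one passes the type-$\mathrm{IV}$ step and the $\mathrm{I}^*$-family step (the cubic $P(T)$ there is $T^3$, a triple root) and reaches the type-$\mathrm{IV}^*$ step, where the auxiliary equation $Y(Y+b/p^2)$ again splits, giving $c_p=3$. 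When $\beta=0$ and $p\neq 3$ one has $\mathrm{ord}_p(a^3)\geq 3>0=\mathrm{ord}_p(27b)$, hence $\mathrm{ord}_p(\Delta)=\mathrm{ord}_p(D)=0$ and the reduction is good; this finishes part (ii), the remaining possibility $p=3$ being exactly part (iii).

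The genuinely delicate case is part (iii): $p=3$, $\mathrm{ord}_3(a)\geq 1$, $\mathrm{ord}_3(b)=0$. Writing $a=3a_1$ one has $D=27(a_1^3-b)$, so $\mathrm{ord}_3(\Delta)=\mathrm{ord}_3(D)=3+\mathrm{ord}_3(a_1^3-b)\geq 3$, and the four branches $\mathrm{ord}_3(D)\in\{3,4,5\}$ or $\geq 6$ are meant to pin down the type. The main obstacle is that now the singular point of the reduction is not the origin: completing the square rewrites \eqref{eq:3torsion} as $v^2=4x^3+9a_1^2x^2+6a_1bx+b^2$, whose mod-$3$ reduction $v^2=x^3+b^2$ has its cusp at $(-b^2,0)$, so Tate's algorithm must begin with a nontrivial translation, after which one must carry the parameter $\mathrm{ord}_3(D)$ through several further steps in order to separate the types $\mathrm{II}$, $\mathrm{III}$, $\mathrm{IV}$ and $\mathrm{I}^*_{\mathrm{ord}_3(D)-6}$. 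I expect this bookkeeping to be the technical heart of the argument — in particular, showing that $\mathrm{ord}_3(D)=4$ forces type $\mathrm{II}$ and $\mathrm{ord}_3(D)=5$ forces type $\mathrm{IV}$ asserts strictly more than the relation $\mathrm{ord}_3(\Delta)=f+m-1$ (with conductor exponent $f\geq 2$ at $p=3$) gives on its own — and it is precisely here that one would lean on, or check against, the explicit computations of Kozuma in \cite{koz}.
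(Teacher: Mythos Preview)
The paper does not give its own proof of this proposition: it is stated with the parenthetical reference ``See \cite[Proposition 3.5 and Lemma 3.6]{koz}'' and then used as a black box. So there is no in-paper argument to compare your proposal against; your sketch is essentially a reconstruction of what one expects Kozuma's proof to be.

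That said, your reconstruction is sound. The computations of the $b_i$, $c_4$, and $\Delta$ are correct, and your case split according to $(\alpha,\beta)=(\mathrm{ord}_p a,\mathrm{ord}_p b)$ together with the normalization $\min\{\alpha,\lfloor\beta/3\rfloor\}=0$ is exactly the right organizing principle. In part (i) your check that $p\nmid D$ when $\alpha=0<\beta$ (hence $\mathrm{ord}_p(\Delta)=3\beta$) and that $p\nmid c_4$ when $\alpha=\beta=0$ and $p\mid D$ are both correct; note in the latter case that $p=3$ with $3\mid D$ is automatically excluded, which you handle implicitly. In part (ii) the Tate-algorithm walk for $\beta\in\{1,2\}$ is accurate, including the observation that the auxiliary quadratics $T(T+b/p)$ and $Y(Y+b/p^2)$ split over $\mathbb{F}_p$, giving $c_p=3$ in both the $\mathrm{IV}$ and $\mathrm{IV}^*$ cases. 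For part (iii) you correctly identify that the singular point is not at the origin and that a translation is required before continuing the algorithm, and you are right that separating $\mathrm{ord}_3(D)=3,4,5,\geq 6$ into the stated types requires several further steps of bookkeeping; deferring to \cite{koz} at that point is exactly what the paper itself does.
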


\begin{lemma}\label{lemmatamagawanumber}
Let $E/\mathbb{Q}$ be an elliptic curve given by a Weierstrass equation of the form $($\ref{eq:3torsion}$)$. If $3 \nmid \prod_{p}c_p(E)$, then $b=1$.
\end{lemma}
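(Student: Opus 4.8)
The plan is to use Proposition~\ref{prop:3torsionreduction} to control the contribution of each prime $p$ to $\prod_p c_p(E)$ in terms of $\mathrm{ord}_p(a)$ and $\mathrm{ord}_p(b)$, and then to argue by contradiction: if $b > 1$, some prime divides $b$ and forces $3 \mid c_p(E)$. First I would fix a prime $p$ with $p \mid b$, so $\mathrm{ord}_p(b) \geq 1$, and run through the cases of Proposition~\ref{prop:3torsionreduction}. In case~(i), either $3\,\mathrm{ord}_p(a) < \mathrm{ord}_p(b)$, where the reduction is split $\mathrm{I}_{3\,\mathrm{ord}_p(b)}$ with $c_p(E) = 3\,\mathrm{ord}_p(b)$, so $3 \mid c_p(E)$ immediately; or $3\,\mathrm{ord}_p(a) = \mathrm{ord}_p(b)$, which (since $\mathrm{ord}_p(b) \geq 1$) forces $\mathrm{ord}_p(a) \geq 1$, but then $p \mid a$ and $p^3 \mid b$, contradicting the normalization of Equation~$(\ref{eq:3torsion})$ that for every prime $q$ either $q \nmid a$ or $q^3 \nmid b$. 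So in case~(i) we get $3 \mid c_p(E)$.

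Next I would handle case~(ii), where $3\,\mathrm{ord}_p(a) > \mathrm{ord}_p(b)$. Since $p \mid b$ we have $\mathrm{ord}_p(b) \in \{1, 2\}$ (the value $\mathrm{ord}_p(b) \geq 3$ together with $3\,\mathrm{ord}_p(a) > \mathrm{ord}_p(b)$ would force $\mathrm{ord}_p(a) \geq 1$, again violating the normalization; more carefully, $3\,\mathrm{ord}_p(a) > \mathrm{ord}_p(b) \geq 3$ gives $\mathrm{ord}_p(a) \geq 1$ and $\mathrm{ord}_p(b) \geq 3$, contradiction). For $\mathrm{ord}_p(b) = 1$ the reduction is type $\mathrm{IV}$ with $c_p(E) = 3$, and for $\mathrm{ord}_p(b) = 2$ it is type $\mathrm{IV}^*$ with $c_p(E) = 3$; in both cases $3 \mid c_p(E)$. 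Case~(iii) of the proposition only occurs when $\mathrm{ord}_p(b) = 0$, so it is irrelevant to a prime dividing $b$. Combining the cases: any prime $p \mid b$ yields $3 \mid c_p(E)$, hence $3 \mid \prod_p c_p(E)$, contradicting the hypothesis $3 \nmid \prod_p c_p(E)$. Therefore $b$ has no prime divisors, and since $b > 0$ by construction, $b = 1$.

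I do not anticipate a serious obstacle here; the only point requiring care is the bookkeeping with the normalization condition "$q \nmid a$ or $q^3 \nmid b$" to rule out the sub-case $3\,\mathrm{ord}_p(a) = \mathrm{ord}_p(b)$ (and, implicitly, $\mathrm{ord}_p(b) \geq 3$ in case~(ii)), so that the only surviving possibilities for $p \mid b$ are exactly those in which Proposition~\ref{prop:3torsionreduction} records $c_p(E)$ divisible by $3$. One should also note that the statement concerns $\prod_p c_p(E)$ over primes of bad reduction only, but this is harmless: a prime contributing a factor divisible by $3$ is automatically a prime of bad reduction (types $\mathrm{I}_n$ with $n \geq 1$, $\mathrm{IV}$, $\mathrm{IV}^*$ are all bad reduction types).
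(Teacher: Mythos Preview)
Your proof is correct and follows exactly the same approach as the paper's own proof: take a prime $p \mid b$, invoke Proposition~\ref{prop:3torsionreduction} Parts~(i) and~(ii), and conclude that the reduction type is split $\mathrm{I}_{3\,\mathrm{ord}_p(b)}$, $\mathrm{IV}$, or $\mathrm{IV}^*$, each of which gives $3 \mid c_p(E)$. Your write-up is simply a more detailed unpacking of the paper's one-sentence argument, including the explicit verification (via the normalization condition) that the sub-cases $3\,\mathrm{ord}_p(a) = \mathrm{ord}_p(b)$ and $\mathrm{ord}_p(b) \geq 3$ cannot occur when $p \mid b$.
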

\begin{proof}
If a prime $r$ divides $b$, then, by Proposition \ref{prop:3torsionreduction}, Parts $(i)$ and $(ii)$, $E/\mathbb{Q}$ has reduction type IV, IV$^*$ or split I$_{3\text{ord}_{r}(b)}$, and in any of these cases $3 \mid c_r(E)$.
\end{proof}

We assume from now on that $3 \nmid \prod_{p}c_p(E)$ and, hence, $b=1$ by Lemma \ref{lemmatamagawanumber}. We note that since $b=1$, we obtain from Proposition \ref{prop:3torsionreduction} that $E/\mathbb{Q}$ cannot have reduction of type IV$^*$ modulo $3$ and Proposition \ref{prop:3torsionreduction} implies that $E/\mathbb{Q}$ is semi-stable away from $3$.

Since $b=1$, we have that $E/\mathbb{Q}$ has a Weierstrass equation of the form
\begin{align}\label{eq:3torsion2}
    y^2+axy+y=x^3
\end{align}
where $a$ is an integer. We must have $a^3-27 \neq 0$, since the discriminant of this Weierstrass equation is $$\Delta=a^3-27 \quad \text{and we also have} \quad c_4=a(a^3-24).$$

Recall that essentially by the construction of our Weierstrass equation the point $(0,0)$ has order $3$ and denote by $<(0,0)>$ the group generated by the point $(0,0)$. Let $\widehat{E}:=E/<(0,0)>$ and let $\phi:  E \rightarrow{} \widehat{E}$ be the associated $3$-isogeny. We denote by $\hat{\phi}: \widehat{E} \rightarrow{} E$ the dual isogeny. The following lemma will be used repeatedly in what follows.

\begin{lemma}\label{tamagawagreaterorequalto2}
Let $E/\mathbb{Q}$ be an elliptic curve given by a Weierstrass equation of the form $($\ref{eq:3torsion2}$)$ and assume that $E/\mathbb{Q}$ has analytic rank $0$. Let $\widehat{E}:=E/<(0,0)>$ and let $\phi:  E \rightarrow{} \widehat{E}$ be the associated $3$-isogeny. If $\mathrm{ord}_3(\frac{\prod_{p}c_p(\widehat{E})}{\prod_{p}c_p(E)})\geq 2$, then $9$ divides $|\Sha(E/\mathbb{Q})|$.

\end{lemma}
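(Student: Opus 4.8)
The plan is to use the Cassels--Tate style comparison of Selmer groups along the $3$-isogeny $\phi\colon E\to\widehat E$ and its dual $\hat\phi$, together with the analytic rank $0$ hypothesis, which forces both $E(\mathbb{Q})$ and $\widehat E(\mathbb{Q})$ to have rank $0$ (Gross--Zagier and Kolyvagin, as already invoked in the proof above, applied to the isogenous curve $\widehat E$). The central tool is the exact sequence relating the $\phi$-Selmer group of $E$ and the $\hat\phi$-Selmer group of $\widehat E$ to the $3$-torsion of $\Sha$: there is an exact sequence
\begin{align*}
0 \to \frac{\widehat E(\mathbb{Q})[\hat\phi]}{\phi(E(\mathbb{Q})[3])} \to \mathrm{Sel}^{\hat\phi}(\widehat E/\mathbb{Q}) \to \mathrm{Sel}^{3}(E/\mathbb{Q}) \to \mathrm{Sel}^{\phi}(E/\mathbb{Q}),
\end{align*}
and the familiar formula (going back to Cassels) for the ratio of Selmer sizes in terms of local terms. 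First I would recall that, because $E$ has rank $0$ and a rational $3$-torsion point, the $3$-Selmer group of $E$ has $\mathbb{F}_3$-dimension $\dim_{\mathbb{F}_3}\Sha(E/\mathbb{Q})[3] + 1$ (the $+1$ from the rational $3$-torsion), and likewise for $\widehat E$; and $\Sha(E/\mathbb{Q})[3]$ and $\Sha(\widehat E/\mathbb{Q})[3]$ are related through the $\phi$- and $\hat\phi$-parts.

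The key quantitative step is Cassels' formula: for a $\mathbb{Q}$-rational isogeny $\phi$ of prime degree $\ell=3$ between elliptic curves, one has
\begin{align*}
\frac{\#\mathrm{Sel}^{\phi}(E/\mathbb{Q})}{\#\mathrm{Sel}^{\hat\phi}(\widehat E/\mathbb{Q})} = \frac{1}{\#\widehat E(\mathbb{Q})[\hat\phi]}\cdot \prod_{v}\frac{\#\widehat E(\mathbb{Q}_v)/\phi(E(\mathbb{Q}_v))}{\#\, \ker\bigl(\text{local}\bigr)}
\end{align*}
which, after the standard bookkeeping, expresses $\mathrm{ord}_3\bigl(\#\mathrm{Sel}^{\hat\phi}(\widehat E)/\#\mathrm{Sel}^{\phi}(E)\bigr)$ in terms of the archimedean term, the terms at $3$, and a sum over the finite primes of the quantities $\mathrm{ord}_3\bigl(c_p(\widehat E)/c_p(E)\bigr)$ (the local index of the image of $\phi$ on points is controlled by the ratio of Tamagawa numbers up to the contribution from the kernel, which for a $3$-isogeny contributes a bounded amount). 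The upshot I want is: if $\mathrm{ord}_3\bigl(\prod_p c_p(\widehat E)/\prod_p c_p(E)\bigr)\ge 2$, then the global $3$-Selmer group of $E$ (or of $\widehat E$) is forced to be large enough that, after subtracting off the contribution of the Mordell--Weil group (rank $0$) and the rational torsion, $\Sha(E/\mathbb{Q})[3]$ must have $\mathbb{F}_3$-rank $\ge 2$, i.e.\ $9 \mid \#\Sha(E/\mathbb{Q})$ — using that $\#\Sha$ of an elliptic curve is a perfect square (or rather that its $3$-primary part, being endowed with a nondegenerate alternating pairing, has square order) to upgrade ``rank $\ge 2$'' to ``divisible by $9$''.

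Concretely, I would run the argument as follows. Write $E[\hat\phi]\cong\mathbb{Z}/3\mathbb{Z}$ generated by the image of $(0,0)$ (so $E(\mathbb{Q})[\hat\phi]\ne 0$), and note $\widehat E(\mathbb{Q})[\phi]$ is either trivial or all of $\mathbb{Z}/3\mathbb{Z}$. Combining the two four-term Selmer sequences (for $\phi$ and for $\hat\phi$) gives
\begin{align*}
\dim_{\mathbb{F}_3}\mathrm{Sel}^3(E/\mathbb{Q}) \ge \dim_{\mathbb{F}_3}\mathrm{Sel}^{\hat\phi}(\widehat E/\mathbb{Q}) - \dim_{\mathbb{F}_3}\frac{E(\mathbb{Q})}{\hat\phi(\widehat E(\mathbb{Q}))},
\end{align*}
and the Cassels formula bounds $\dim\mathrm{Sel}^{\hat\phi}(\widehat E/\mathbb{Q})$ from below by (roughly) $\mathrm{ord}_3\bigl(\prod_p c_p(\widehat E)/\prod_p c_p(E)\bigr)$ minus the contributions at $3$ and $\infty$ and minus $1$ for the torsion term; one then checks that for the Weierstrass form $(\ref{eq:3torsion2})$ the archimedean and $3$-adic contributions are small enough (bounded by $1$ each) that a hypothesis of $\mathrm{ord}_3\ge 2$ survives with enough to spare. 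Finally, $E(\mathbb{Q})$ has rank $0$ so $E(\mathbb{Q})/\hat\phi(\widehat E(\mathbb{Q}))$ is a quotient of $E(\mathbb{Q})_{\mathrm{tors}}$, hence of $3$-rank at most $1$; subtracting, one gets $\dim_{\mathbb{F}_3}\mathrm{Sel}^3(E/\mathbb{Q}) \ge 2 + 1$ (the extra $1$ from $E(\mathbb{Q})[3]\ne 0$ sitting inside $\mathrm{Sel}^3$), whence $\dim_{\mathbb{F}_3}\Sha(E/\mathbb{Q})[3]\ge 2$, and the square order of $\Sha$ gives $9\mid\#\Sha(E/\mathbb{Q})$.

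The main obstacle I anticipate is the careful accounting of the \emph{local} terms in the Cassels formula: one must pin down exactly how $\mathrm{ord}_3\bigl(\#\widehat E(\mathbb{Q}_p)/\phi(E(\mathbb{Q}_p))\bigr)$ relates to $\mathrm{ord}_3\bigl(c_p(\widehat E)/c_p(E)\bigr)$ at primes of bad (additive or multiplicative) reduction — the two differ by a term involving the component groups and the behavior of the kernel of reduction — and to control the contributions at $v=3$ and $v=\infty$ precisely enough that the hypothesis $\mathrm{ord}_3\ge 2$ is not eaten up. This is where one uses the explicit shape $(\ref{eq:3torsion2})$ (with $b=1$), Proposition \ref{prop:3torsionreduction}, and the fact that $E$ is semi-stable away from $3$, to bound those auxiliary terms; everything else is the standard descent-by-$3$-isogeny machinery together with the nondegeneracy of the Cassels--Tate pairing.
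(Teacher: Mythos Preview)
Your descent-by-$3$-isogeny strategy---comparing $\mathrm{Sel}^\phi$ and $\mathrm{Sel}^{\hat\phi}$ via the Cassels/Schaefer local formula, then extracting $\Sha(E/\mathbb{Q})[3]$ using that the rank is $0$---is the standard route and is, as far as one can tell, what the paper intends. Note, however, that the paper gives no self-contained argument: its entire proof of this lemma is the assertion that the proof of \cite[Lemma~3.6, Part~(i)]{mentzelosagasheconjecture} applies verbatim, the author merely observing that the extra reduction-type hypothesis present in that earlier theorem is not actually used in the proof of that lemma. So your sketch is a reconstruction of the cited argument rather than of anything written out in this paper, and in substance the two agree.

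Two small comments on your write-up. First, your final step invokes the square order of $\Sha$ to pass from $\dim_{\mathbb{F}_3}\Sha(E/\mathbb{Q})[3]\ge 2$ to $9\mid|\Sha(E/\mathbb{Q})|$, but this implication is immediate without any pairing: dimension $\ge 2$ already gives $|\Sha(E/\mathbb{Q})[3]|\ge 9$. The Cassels--Tate pairing would only be needed had you obtained merely $\dim\ge 1$; you should check which of these your bookkeeping actually yields once the local terms are pinned down. Second, as you rightly flag, the honest work is the precise accounting of the local contributions at $v=3$ and $v=\infty$ (the factor coming from $\phi$ on the formal group at $3$, and the archimedean cokernel), and verifying that together with the global torsion terms they do not consume the hypothesis $\mathrm{ord}_3\bigl(\prod_p c_p(\widehat E)/\prod_p c_p(E)\bigr)\ge 2$. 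Your sketch defers these computations; that is acceptable as a plan, but the proof is not complete until they are carried out, and this is exactly where the explicit form \eqref{eq:3torsion2} (with $b=1$) and the reduction information from Proposition~\ref{prop:3torsionreduction} are used in the cited reference.
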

\begin{proof}
The proof of Lemma \ref{tamagawagreaterorequalto2} is exactly the same as the proof of \cite[Lemma 3.6, Part $(i)$]{mentzelosagasheconjecture}. The proof of \cite[Lemma 3.6]{mentzelosagasheconjecture} (as well as the discussion after the proof of \cite[Claim 3.2]{mentzelosagasheconjecture}) is independent of the assumption in \cite[Theorem 3.1]{mentzelosagasheconjecture} that the curve has reduction of type I$_n^*$, for some $n \geq 0$.
\end{proof}

By a proposition of Hadano, see \cite[Theorem 1.1]{had}, since $b=1$, $\widehat{E}/\mathbb{Q}$ is given by $$y^2+(a+6)xy+(a^2+3a+9)y=x^3.$$ The discriminant of this equation is $\widehat{\Delta}=(a^3 - 27)^3$ and $\widehat{c_4}=a(a^3+216)$.

\begin{claim}\label{splitmultiplicativereduction}
The curve $\widehat{E}/\mathbb{Q}$ has a prime $q \neq 3$  of split multiplicative reduction except if $E/\mathbb{Q}$ has Cremona label 27a3, 27a4, or 54a3.
\end{claim}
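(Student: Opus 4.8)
The plan is to work with the explicit Weierstrass equation $\widehat{E}: y^2+(a+6)xy+(a^2+3a+9)y=x^3$ with discriminant $\widehat{\Delta}=(a^3-27)^3$ and $\widehat{c_4}=a(a^3+216)$, and to locate a prime $q\neq 3$ at which $\widehat{E}$ has split multiplicative reduction by analyzing the factor $a^3-27=(a-3)(a^2+3a+9)$ of $\widehat{\Delta}$. First I would apply the Tate-algorithm criterion recalled in Section \ref{section2} (Observation 1 and the split-$\mathrm{I}_n$ criterion): a prime $q$ with $q\mid \widehat{\Delta}$, $q\nmid \widehat{c_4}$ gives multiplicative reduction, and it is split precisely when the polynomial $T^2+a_1T+a_2$ splits over $\mathbb{F}_q$, i.e. here when $T^2+(a+6)T-1$ has a root mod $q$. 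So the task reduces to: (1) exhibit a prime $q\neq 3$ dividing $a^3-27$ but not $\widehat{c_4}=a(a^3+216)$, and (2) check that the quadratic splits mod $q$.

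For step (2) I expect a clean trick: modulo a prime $q\mid(a^3-27)$ we have $a^3\equiv 27$, so one can try to produce an explicit root of $T^2+(a+6)T-1$ in terms of $a$ — indeed the point $(0,0)$ has order $3$ on $\widehat{E}$ as well (it is the image of a $3$-torsion point under $\hat\phi$, or one checks directly), and the reduction of a rational $3$-torsion point forces the reduction to be split multiplicative of type $\mathrm{I}_{3n}$ whenever it is multiplicative, by exactly the argument in Proposition \ref{prop:3torsionreduction}(i) applied to $\widehat{E}$'s own $3$-torsion structure; alternatively Kozuma's Proposition \ref{prop:3torsionreduction} applies verbatim to $\widehat{E}$ since it too has the required shape $y^2+\widehat{a}xy+\widehat{b}y=x^3$ with $\widehat{b}=a^2+3a+9$. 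This is the conceptually cleanest route: whenever $\widehat{E}$ has multiplicative reduction at a prime $q\neq 3$, that reduction is automatically split (because $\widehat{E}$ carries a rational $3$-torsion point), so step (2) is free and I only need step (1).

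For step (1) I would argue that $a^3-27$ must have a prime divisor $q\neq 3$ with $q\nmid\widehat{c_4}$ unless $a$ is very constrained. If $q\mid a^3-27$ and $q\mid a$ then $q\mid 27$ so $q=3$; if $q\mid a^3-27$ and $q\mid a^3+216=a^3-27+243$ then $q\mid 243$ so $q=3$. Hence \emph{every} prime divisor of $a^3-27$ other than $3$ fails to divide $\widehat{c_4}$. So $\widehat{E}$ has a split multiplicative prime $q\neq 3$ unless $a^3-27=\pm 3^k$ for some $k\geq 0$. Solving $a^3-27=\pm 3^k$ over the integers is an elementary finite computation: writing $a=3^m a'$ with $3\nmid a'$ forces small cases, and one finds only finitely many $a$ (the relevant ones being those giving $j_E\neq 0,1728$ and producing the curves $27a3$, $27a4$, $54a3$); I would finish by feeding those finitely many values of $a$ into SAGE exactly as in the earlier propositions to identify the Cremona labels and confirm these are genuine exceptions.

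The main obstacle is making the "multiplicative $\Rightarrow$ split" claim for $\widehat{E}$ airtight: one must confirm $\widehat{E}$ genuinely has a rational point of order $3$ (so that Proposition \ref{prop:3torsionreduction} or Observation 1 applies with the correct sign of the splitting field), rather than merely a $3$-isogeny — the dual isogeny $\hat\phi:\widehat{E}\to E$ has kernel of order $3$, and one needs this kernel to be \emph{rational pointwise}, which follows from $E[\phi]$ being the rational subgroup $\langle(0,0)\rangle$ together with the Weil pairing / the fact that $\widehat{E}=E/\langle(0,0)\rangle$ again has $\langle(0,0)\rangle$ among its rational points via Hadano's model. Once that is pinned down, the rest is the short Diophantine analysis of $a^3-27=\pm 3^k$ plus a SAGE check, and I would also double-check the borderline prime $q=3$ is correctly excluded throughout (the hypothesis $j_E\neq 0,1728$ is what rules out the extra bad behavior at $3$ and keeps the list finite).
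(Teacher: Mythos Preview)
Your outline contains a genuine gap at step~(2). The assertion that ``whenever $\widehat{E}$ has multiplicative reduction at a prime $q\neq 3$, that reduction is automatically split (because $\widehat{E}$ carries a rational $3$-torsion point)'' is false. A rational $3$-torsion point does not force multiplicative reduction to be split: at a prime $q$ of non-split multiplicative reduction one has $|\widetilde{E}_{\mathrm{ns}}(\mathbb{F}_q)|=q+1$, and if $q\equiv 2\pmod 3$ this group happily accommodates a point of order~$3$. (Indeed $E$ itself carries the rational $3$-torsion point $(0,0)$, yet the whole analysis in Claim~\ref{lastclaimproof} treats its primes of non-split multiplicative reduction.) Neither does Proposition~\ref{prop:3torsionreduction} rescue you: applied to $\widehat{E}$ with $\widehat a=a+6$, $\widehat b=a^2+3a+9$, a prime $q\neq 3$ dividing $a-3$ but not $a^2+3a+9$ lands you in the branch $3\,\mathrm{ord}_q(\widehat a)=\mathrm{ord}_q(\widehat b)=0$, which only yields ``$\mathrm{I}_n$'' with no splitness conclusion. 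Likewise, Observation~1 and the polynomial $T^2+(a+6)T$ are only relevant after the singular point has been translated to the origin, i.e.\ only when $q\mid a_3=a^2+3a+9$.

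This gap is not cosmetic: it breaks your Diophantine reduction. Take $a=-3$. Then $a^3-27=-54$ is \emph{not} of the form $\pm 3^k$, so your argument would claim $\widehat{E}$ has a split multiplicative prime $q\neq 3$. But the only such candidate is $q=2$, and there $\widehat E:y^2+3xy+9y=x^3$ reduces to $y^2+xy+y=x^3$ over $\mathbb F_2$, whose nodal tangent cone $Y^2+XY+X^2$ is irreducible --- non-split. This $a=-3$ is precisely the exception $54a3$, which your equation $a^3-27=\pm 3^k$ fails to detect. The fix is exactly what the paper does: work only with primes $q\neq 3$ dividing $\widehat b=a^2+3a+9$. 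For those, $q\nmid a+6$ (as you essentially computed, since $(a+6)\mid$ and $(a^2+3a+9)\mid$ force $q=3$), so Observation~1 applies and the reduction is genuinely split; the Diophantine problem then becomes $a^2+3a+9=3^k$, giving $a\in\{0,\pm 3,-6\}$ and the three listed curves.
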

\begin{proof}
If $q$ is a prime such that $q \mid a^2+3a+9$ and $q \mid a+6$, then $$q \mid a^2+3a+9-(a+6)^2=a^2+3a+9-a^2-12a-36=-9(a+3).$$ Therefore, either $q=3$ or $q \mid a+3$. If $q \mid a+3$, then since $q \mid a + 6$, we obtain that $q =3$. This proves that if $q \neq 3$ is a prime divisor of $a^2+3a+9$, then $\widehat{E}/\mathbb{Q}$ has split multiplicative reduction modulo $q$. 

First, note that $a^2+3a+9 >1$ so $a^2+3a+9$ cannot be equal to $\pm 1$. We now prove that $a^2+3a+9$ is not equal to a power of $3$ except for $a=0, \pm 3, -6$. Assume that $a^2+3a+9$ is a power of $3$. Then $3 \mid a$ so we can write $a=3a'$ for some $a'$. Therefore, we have that $$a^2+3a+9=9((a')^2+a'+1)$$ and $(a')^2+a'+1$ cannot be a power of $3$ unless $a'=1,-2$ because it is always non zero modulo $9$. Moreover, if $(a')^2+a'+1= \pm 1$, then $a'=0$ or $-1$. This proves that $a^2+3a+9$ is not equal to a power of $3$ except for $a=0, \pm 3, -6$.

If $a^2+3a+9$ is not a power of $3$, then $a^2+3a+9$ has a divisor $q \neq 3$ and, hence, $\widehat{E}/\mathbb{Q}$ has split multiplicative reduction modulo $q$. The exceptions $a=0, \pm 3, -6$ give the elliptic curves $E/\mathbb{Q}$ with Cremona labels 27a3, 27a4, or 54a3.
\end{proof}

Since $\widehat{E}/\mathbb{Q}$ has split multiplicative modulo $q$, we get that $c_q(\widehat{E})=\mathrm{ord}_q((a^3-27)^3)=3\mathrm{ord}_q(a^3-27)$. Moreover, since $E/\mathbb{Q}$ is isogenous to $\widehat{E}/\mathbb{Q}$ and $\widehat{E}/\mathbb{Q}$ has multiplicative reduction modulo $q$, by \cite[Theorem $5.4$ Part $(4)$]{dd}, we know that $E/\mathbb{Q}$ has multiplicative reduction modulo $q$. If $E/\mathbb{Q}$ has nonsplit multiplicative reduction modulo $q$, then $c_q(E)=1$ or $2$, depending on whether $\mathrm{ord}_q(\Delta)$ is odd or even, respectively. If $E/\mathbb{Q}$ has split multiplicative reduction modulo $q$, then $c_q(E)=\mathrm{ord}_q(\Delta)$. Therefore, $c_q(E) \mid \mathrm{ord}_q(\Delta)$. However, $\Delta=a^3-27$ and, hence,  $c_q(E) \mid \mathrm{ord}_q(a^3-27)$ which implies that $\textrm{ord}_3 (\frac{c_q(\widehat{E})}{c_q(E)})>0$.

\begin{emptyremark}\label{rootnumbers}
We now recall some facts concerning root numbers that will be needed in the proof of Claim \ref{lastclaimproof} below. Let $w(E):=\displaystyle \prod_{p \in M_{\mathbb{Q}} } w_p(E)$ where $w_p(E)$ is the local root number of $E/\mathbb{Q}$ at $p$ and $M_{\mathbb{Q}}$ is the set of places of $\mathbb{Q}$. The number $w(E)$ is called the global root number of $E/\mathbb{Q}$.
Since the analytic rank of $E/\mathbb{Q}$ is $0$, using \cite[Theorem 3.22]{darmonmodularellipticcurves}, we find that $\Sha(E/\mathbb{Q})$ is finite and the rank of $E/\mathbb{Q}$ is $0$. Consequently, by \cite[Theorem 1.4]{dd1} we obtain that $1=(-1)^{rk(E/\mathbb{Q})}=w(E)$. 
Recall that we assume that $j_E \neq 0, 1728$. The local root number $w_p(E)$ of $E/\mathbb{Q}$ at $p$ is as follows, see \cite[Page 95]{connel} and \cite[Page 132]{rohrlich} for $j_E \neq 0, 1728$ and $p \geq 5$, and \cite[Page 1051]{halberstadt} for $p=2$ or $3$,
\[ \begin{cases} 
      w_{\infty}(E)=-1 \\
      w_p(E)=1 & \mathrm{if} \ E/\mathbb{Q} \ \mathrm{has} \ \mathrm{modulo} \ p \ \mathrm{good} \ \mathrm{or} \ \mathrm{nonsplit} \ \mathrm{multiplicative} \ \mathrm{reduction}.\\
      w_p(E)=-1 & \mathrm{if} \ E/\mathbb{Q} \ \mathrm{has} \ \mathrm{modulo} \ p \ \mathrm{split} \ \mathrm{multiplicative} \  \mathrm{reduction}.\\
   \end{cases}
\]
\end{emptyremark}

 We will now show that $9$ divides $|\Sha(E/\mathbb{Q})$|, unless $E/\mathbb{Q}$ belongs to one of the exceptions to Theorem \ref{exceptionlist}. By Lemma \ref{tamagawagreaterorequalto2}, it is enough to show that $\text{ord}_3(\frac{\prod_{p}c_p(\widehat{E})}{\prod_{p}c_p(E)})\geq 2.$ We achieve this in the following claim.
\begin{claim}\label{lastclaimproof}
If $E/\mathbb{Q}$ does not belong to the families $(a)$ or $(b)$ of Theorem \ref{exceptionlist}, then $$\text{ord}_3(\frac{\prod_{p}c_p(\widehat{E})}{\prod_{p}c_p(E)}) \geq 2.$$
\end{claim}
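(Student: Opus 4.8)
The plan is to prove $\text{ord}_3\!\bigl(\prod_p c_p(\widehat E)\bigr)\ge 2$; since we are assuming $3\nmid\prod_p c_p(E)$, this is exactly the asserted inequality, and once it is in hand Lemma~\ref{tamagawagreaterorequalto2} gives that $9$ divides $|\Sha(E/\mathbb{Q})|$. I would begin with a local decomposition of the left–hand side. Recall that (since $b=1$) both $E$ and $\widehat E$ are semi-stable away from $3$, and that the displayed equations $y^2+axy+y=x^3$ and $y^2+(a+6)xy+(a^2+3a+9)y=x^3$ are minimal at every prime $q\ne 3$ of multiplicative reduction: such a $q$ satisfies $q\nmid a$ (otherwise $q\mid a^3-27$ would force $q=3$), and then $a^3-24\equiv 3$ and $a^3+216\equiv 243$ modulo $q$ are units, so $\text{ord}_q(c_4)=\text{ord}_q(\widehat{c_4})=0$. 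Hence at a prime $q\ne 3$ of split multiplicative reduction $c_q(\widehat E)=\text{ord}_q(\widehat\Delta)=3\,\text{ord}_q(a^3-27)=3\,c_q(E)$ with $3\nmid c_q(E)$, so $q$ contributes exactly $1$ to $\text{ord}_3\bigl(\prod_p c_p(\widehat E)\bigr)$; at a prime $q\ne 3$ of nonsplit multiplicative or good reduction $c_q(\widehat E)\in\{1,2\}$ and contributes $0$. Therefore
\[
\text{ord}_3\Bigl(\prod_p c_p(\widehat E)\Bigr)=\text{ord}_3\bigl(c_3(\widehat E)\bigr)+\nu,
\]
where $\nu$ is the number of primes $q\ne 3$ at which $E$ has split multiplicative reduction, and it suffices to exhibit two factors of $3$ on the right–hand side.

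The first factor comes from Claim~\ref{splitmultiplicativereduction} and the discussion after it: unless $E$ is one of 27a3, 27a4, 54a3, the curve $\widehat E$ has a prime $q\ne 3$ of split multiplicative reduction, so $\nu\ge 1$. I would then note that none of these three curves actually occurs here. One of 27a3, 27a4 is the curve $y^2+y=x^3$, which has $j_E=0$ and is excluded by hypothesis; the other exceptional curve of conductor $27$ has $\text{ord}_3(a^3-27)=5$, hence reduction type IV modulo $3$ by Proposition~\ref{prop:3torsionreduction}(iii), so $3\mid c_3(E)$, contrary to the standing assumption; and 54a3 has $\text{ord}_3(a^3-27)=3$ with reduction type II modulo $3$, so it lies in family $(b)$ and need not be handled. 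Thus $\nu\ge 1$.

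For the second factor I would split on the reduction type of $E$ modulo $3$. If $E$ has type II or IV modulo $3$ it lies in family $(b)$ and there is nothing to prove; otherwise, since $b=1$, Proposition~\ref{prop:3torsionreduction}(iii) leaves only the possibilities ``good, III, or $\mathrm{I}_n^*$'' (type IV being already barred by $3\nmid c_3(E)$ and type IV$^*$ by $b=1$), and in every such case I claim $3\nmid c_3(\widehat E)$. This is clear if $E$ has good reduction at $3$, since then so does $\widehat E$; otherwise I would prove it either by running Tate's algorithm on $y^2+(a+6)xy+(a^2+3a+9)y=x^3$ at $p=3$ — minimalizing, which is needed exactly when $a\equiv 3\pmod 9$, and separating sub-cases by $\text{ord}_3(a)$ and $\text{ord}_3(a-3)$ — or by the following bookkeeping: the $3$-isogeny $\phi$ induces maps $\phi_*\colon\Phi_E\to\Phi_{\widehat E}$ and $\hat\phi_*\colon\Phi_{\widehat E}\to\Phi_E$ of Néron component groups with $\hat\phi_*\circ\phi_*=[3]$ and $\phi_*\circ\hat\phi_*=[3]$, and since $\Phi_E$ is a $2$-group (types III, $\mathrm{I}_n^*$) while $|\Phi_{\widehat E}|\le 4$, a divisibility $3\mid|\Phi_{\widehat E}|$ would force $|\Phi_{\widehat E}|=3$ and hence $|\Phi_E|=1$, which is impossible. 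Granting $3\nmid c_3(\widehat E)$, it remains to prove $\nu\ge 2$.

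This last step uses the root-number facts of~\ref{rootnumbers}. Since $L(E,1)\ne 0$, the functional equation gives global root number $w(E)=1$; as $E$ is semi-stable away from $3$, its only possibly-additive place is $3$, and the other local factors are $w_\infty(E)=-1$ and $w_q(E)=-1$ at precisely the $\nu$ split-multiplicative primes $q\ne 3$. Thus $w_3(E)=(-1)^{\nu+1}$, so $w_3(E)=1$ if and only if $\nu$ is odd. If $E\notin(a)$, then — $E$ being automatically semi-stable away from $3$ — either $\nu\ge 2$ or $w_3(E)=-1$; in the latter case $\nu$ is even, and with $\nu\ge 1$ this again gives $\nu\ge 2$. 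Hence $\text{ord}_3\bigl(\prod_p c_p(\widehat E)\bigr)=\nu\ge 2$, as required. I expect the analysis modulo $3$ to be the main obstacle, and it is precisely what forces the two excluded families: $(b)$, because the type II (and IV) computation of $c_3(\widehat E)$ is the one that can fail to yield the second factor of $3$ — the curve 54a3, for which $\prod_p c_p(E)\cdot|\Sha(E/\mathbb{Q})|=1$, shows the conclusion genuinely fails there — and $(a)$, because the identity $w_3(E)=(-1)^{\nu+1}$ cannot rule out a single split-multiplicative prime when $w_3(E)=1$.
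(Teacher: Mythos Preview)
Your argument follows essentially the same route as the paper's own proof: the paper also reduces to counting primes of split multiplicative reduction and pairing this with a computation showing that the prime $3$ contributes nothing to $\text{ord}_3\bigl(c_3(\widehat E)/c_3(E)\bigr)$ outside types II and IV, then uses the global root number to force $\nu\ge 2$. Your presentation differs in two cosmetic ways --- you work with $\text{ord}_3\bigl(\prod_p c_p(\widehat E)\bigr)$ directly (legitimate, since $3\nmid \prod_p c_p(E)$ by standing hypothesis), and you replace the citation of \cite[Theorem 6.1, line 13]{dd} by a self-contained component-group argument --- but the skeleton is the same.

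There is, however, one genuine slip in your disposal of the three exceptional curves from Claim~\ref{splitmultiplicativereduction}. For the curve 27a4 (corresponding to $a=-6$) you write that type~IV at $3$ forces $3\mid c_3(E)$, contradicting the standing assumption. This is not true: type~IV gives component group $\mathbb{Z}/3\mathbb{Z}$, so $c_3(E)\in\{1,3\}$, and in fact $c_3(E)=1$ for 27a4 (cf.\ Example~\ref{examplesection3}). The reason your argument for $p\ne 3$ (that the rational $3$-torsion point must land nontrivially in the component group) fails here is that at $p=3$ the additive group $\widetilde E_{\rm ns}(\mathbb{F}_3)\cong(\mathbb{F}_3,+)$ does admit points of order $3$, so $(0,0)$ can --- and does --- reduce to a nonsingular point. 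The fix is immediate and is exactly what the paper does: since 27a4 has reduction type~IV modulo $3$ and is (trivially) semi-stable away from $3$, it lies in family~$(b)$ and is excluded by hypothesis. With this one-line correction your proof goes through.
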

\noindent {\it Proof.}
 Let $p \neq 3$ be any prime such that $E/\mathbb{Q}$ has multiplicative reduction modulo $p$. If $E/\mathbb{Q}$ has nonsplit multiplicative reduction modulo $p$, then by lines $5,6$, and $7$ of \cite[Theorem 6.1]{dd} we obtain that $\textrm{ord}_3(\frac{c_p(\widehat{E})}{c_p(E)})=0$. Note that in \cite[Theorem 6.1]{dd}, the curve $\widehat{E}$ is denoted by $E'$ and $\delta,\delta'$ are the valuations of the two discriminants. Since $E/\mathbb{Q}$ is semi-stable away from $3$, we find that  $\textrm{ord}_3(\frac{c_p(\widehat{E})}{c_p(E)})=0$ if $E/\mathbb{Q}$ does not have split multiplicative reduction modulo $p$. Assume now that $E/\mathbb{Q}$ has split multiplicative reduction modulo $p$. Recall that we assume that $b=1$, which implies that $\Delta=a^3-27$ and $\widehat{\Delta}=(a^3-27)^3$. If $\textrm{ord}_p(\Delta)=\gamma$, then $\textrm{ord}_p(\widehat{\Delta})=3\gamma$. Therefore, by line $3$ of \cite[Theorem 6.1]{dd} we obtain that $\textrm{ord}_3(\frac{c_p(\widehat{E})}{c_p(E)})=1$. Since $E/\mathbb{Q}$ is semi-stable away from $3$, the above arguments prove that if $p \neq 3$ is a prime, then $E/\mathbb{Q}$ has split multiplicative reduction modulo $p$ if and only if $\textrm{ord}_3(\frac{c_p(\widehat{E})}{c_p(E)})=1$.
 
 Assume that $E/\mathbb{Q}$ does not have reduction type II or IV modulo $3$ and that $w_3(E)=1$. We can also assume that $E/\mathbb{Q}$ has 2 or more places of split multiplicative reduction because otherwise we are in one of the exceptions to Theorem \ref{exceptionlist}. Line 13 of  \cite[Theorem 6.1]{dd} implies that $\textrm{ord}_3(\frac{c_{3}(\widehat{E})}{c_{3}(E)})=0$. If $p$ is a prime of split multiplicative reduction, then $\textrm{ord}_3(\frac{c_p(\widehat{E})}{c_p(E)})=1$. Moreover, if $r$ is a prime of nonsplit multiplicative reduction, then $\textrm{ord}_3(\frac{c_r(\widehat{E})}{c_r(E)})=0$. Since $E/\mathbb{Q}$ is semi-stable away from $3$, the above prove that $\text{ord}_3(\frac{\prod_{p}c_p(\widehat{E})}{\prod_{p}c_p(E)}) \geq 2.$
 
 Assume now that $E/\mathbb{Q}$ does not have reduction type II or IV modulo $3$, and that $w_3(E)=-1$. We claim that because the analytic rank of $E/\mathbb{Q}$ is zero, the curve $E/\mathbb{Q}$ has an even number of places of split multiplicative reduction. Indeed, by \ref{rootnumbers} we have that $w(E)=1$ and $w_{\infty}(E)=-1$. Moreover, we have that $E/\mathbb{Q}$ is semi-stable away from $3$, and by \ref{rootnumbers} for $p\neq 3$ we obtain that $w_p(E)=-1$ if and only if $E/\mathbb{Q}$ has split multiplicative reduction modulo $p$. This proves that $E/\mathbb{Q}$ has an even number of places of split multiplicative reduction. On the other hand, using the Cremona database \cite{cremonabook} it is easy to see that that the curves with Cremona label 27a3, 27a4, 54a3 all belong to the exceptional family $(b)$ of Theorem \ref{exceptionlist}. Therefore, Claim \ref{splitmultiplicativereduction} gives us that $\widehat{E}/\mathbb{Q}$ has a prime $q \neq 3$ of split multiplicative reduction and, hence, the curve $E/\mathbb{Q}$ has split multiplicative reduction modulo $q$ (see the proof of \cite[Theorem 5.1]{dd}). This proves that $E/\mathbb{Q}$ has at least two places of split multiplicative reduction. If $E/\mathbb{Q}$ does not have split multiplicative reduction modulo $3$, then line $13$ of \cite[Theorem 6.1]{dd} implies that $\textrm{ord}_3(\frac{c_{3}(\widehat{E})}{c_{3}(E)})=0$ (recall that because $b=1$ the curve $E/\mathbb{Q}$ cannot have reduction of type IV$^*$ and Proposition \ref{prop:3torsionreduction} implies that $E/\mathbb{Q}$ cannot have reduction of type II$^*$). If $p$ is a prime of split multiplicative reduction, then $\textrm{ord}_3(\frac{c_p(\widehat{E})}{c_p(E)})=1$. Moreover, if $r$ is a prime of nonsplit multiplicative reduction, then $\textrm{ord}_3(\frac{c_r(\widehat{E})}{c_r(E)})=0$. Since $E/\mathbb{Q}$ is semi-stable away from $3$, the above prove that $\text{ord}_3(\frac{\prod_{p}c_p(\widehat{E})}{\prod_{p}c_p(E)}) \geq 2.$ This concludes our proof.
\end{proof}

\begin{example}\label{examplesection3}
Let $E/\mathbb{Q}$ be the elliptic curve with Cremona label 27a3 (LMFDB label 27.a4), Cremona label 27a4 (LMFDB label 27.a2), or Cremona label 54a3 (LMFDB label 54.a2). Then $E(\mathbb{Q}) \cong \mathbb{Z}/3\mathbb{Z}$, $|\Sha(E/\mathbb{Q})|=1$, and $c_{\infty}(E)\cdot c(E)=1$. Therefore, $|E(\mathbb{Q})_{\rm{tors}}|$ does not divide $|\Sha(E/\mathbb{Q})|\cdot c_{\infty}(E) \cdot c(E)$. Note however, that in all cases $E/\mathbb{Q}$ is not optimal and has Manin constant equal to $3$.
\end{example}

\section{Semi-stable elliptic curves}

In this section we prove Theorem \ref{maintheoremsemistable}. We first recall the definition of an optimal elliptic curve. Let $E/\mathbb{Q}$ be an elliptic curve of conductor $N$. Work of Breuil, Conrad, Diamond, Taylor, and Wiles (see \cite{bcdt}, \cite{taylorwiles}, and \cite{wiles}) implies that there exists a modular parametrization $\phi : X_0(N) \rightarrow E$ defined over $\mathbb{Q}$, where $X_0(N)/\mathbb{Q}$ is the modular curve associated to the congruence subgroup $\Gamma_0(N)$ (see \cite[Section 1.5]{diamondshurman} for more information on the curve $X_0(N)$). The elliptic curve $E/\mathbb{Q}$ is called optimal if it satisfies the following property: if $E'/\mathbb{Q}$ is an elliptic curve contained in the isogeny class of $E/\mathbb{Q}$ and $\phi': X_0(N) \rightarrow E'$ is a modular parametrization defined over $\mathbb{Q}$, then there exists a $\mathbb{Q}$-isogeny $\beta : E \rightarrow E' $ such that $\phi'=\beta \circ \phi$. Equivalently, $E/\mathbb{Q}$ is optimal if ker$(\psi)$ is connected, where  $\psi : J_0(N) \longrightarrow E $ is the map that induces $\phi$ and $J_0(N)/\mathbb{Q}$ is the Jacobian of the curve $X_0(N)/\mathbb{Q}$.

\begin{proposition}\label{optimalsemistablenotexactly1prime}
Let $E/\mathbb{Q}$ be an optimal semi-stable elliptic curve with analytic rank $0$. If $E/\mathbb{Q}$ does not have exactly one prime of split multiplicative reduction, then $|E(\mathbb{Q})_{\textrm{tors}}|$ divides $ c_{\infty}(E) \cdot|\Sha(E/\mathbb{Q})|\cdot c(E)$.
\end{proposition}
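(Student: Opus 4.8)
The plan is to reduce everything to Theorem \ref{maintheorem} and to check that, under the present hypotheses, $E/\mathbb{Q}$ cannot be any of the finitely many exceptions appearing there. I would first split according to $E(\mathbb{Q})_{\textrm{tors}}$, which by Mazur's theorem lies in the usual list. If $E(\mathbb{Q})_{\textrm{tors}} \ncong \mathbb{Z}/3\mathbb{Z}$, then because $E/\mathbb{Q}$ is semi-stable it is in particular semi-stable away from $2$ and has semi-stable reduction modulo $2$, so Parts $(i)$ and $(ii)$ of Theorem \ref{maintheorem} apply; by the remark following Theorem \ref{maintheorem} an optimal curve is never among the listed exceptions, and hence $|E(\mathbb{Q})_{\textrm{tors}}|$ divides $c_{\infty}(E)\cdot c(E)$, which in turn divides $c_{\infty}(E)\cdot|\Sha(E/\mathbb{Q})|\cdot c(E)$.

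It remains to treat the case $E(\mathbb{Q})_{\textrm{tors}} \cong \mathbb{Z}/3\mathbb{Z}$, where I would apply Part $(iii)$ of Theorem \ref{maintheorem} (equivalently Theorem \ref{exceptionlist}). First I would observe that a semi-stable elliptic curve over $\mathbb{Q}$ automatically satisfies $j_E \neq 0,1728$: a curve with $j_E \in \{0,1728\}$ has potentially good reduction at every prime, hence additive reduction at every prime of bad reduction, and since there is no elliptic curve over $\mathbb{Q}$ with everywhere good reduction, such a curve cannot be semi-stable. Thus all hypotheses of Theorem \ref{maintheorem}$(iii)$ are satisfied, and its conclusion $3 \mid c(E)\cdot|\Sha(E/\mathbb{Q})|$ holds as soon as $E/\mathbb{Q}$ lies in neither of the exceptional families $(a)$ and $(b)$.

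Family $(b)$ is eliminated at once, since reduction type II or IV modulo $3$ is additive and $E/\mathbb{Q}$ is semi-stable. To eliminate family $(a)$ the key input is a parity count of the places of split multiplicative reduction via root numbers. Since $E/\mathbb{Q}$ has analytic rank $0$, we have $w(E)=1$, while $w_{\infty}(E)=-1$; as $E/\mathbb{Q}$ is semi-stable, the facts recalled in \ref{rootnumbers} give $w_p(E)=-1$ precisely at the primes of split multiplicative reduction and $w_p(E)=1$ at all other finite primes, so the number of primes of split multiplicative reduction is odd, in particular nonzero. The hypothesis that $E/\mathbb{Q}$ does not have exactly one such prime then forces at least three (hence at least two) of them, so $E/\mathbb{Q}$ violates the ``at most one place of split multiplicative reduction'' condition of family $(a)$. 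Consequently neither exceptional family applies, Theorem \ref{maintheorem}$(iii)$ gives $|E(\mathbb{Q})_{\textrm{tors}}|=3$, which divides $c(E)\cdot|\Sha(E/\mathbb{Q})|$, and therefore also divides $c_{\infty}(E)\cdot|\Sha(E/\mathbb{Q})|\cdot c(E)$. I expect the only point requiring a little care to be the remark on the $j$-invariant, which is needed both to invoke Theorem \ref{maintheorem}$(iii)$ and to legitimately use the root-number formulas of \ref{rootnumbers}; the rest is essentially bookkeeping against the exceptional list.
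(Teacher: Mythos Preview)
Your proof is correct and essentially matches the paper's: split according to $E(\mathbb{Q})_{\rm tors}$, invoke Theorem~\ref{maintheorem}, and use the root-number parity count to exclude the exceptional families of part~$(iii)$. The only cosmetic differences are that you handle the parity in one pass (the paper instead treats the cases of zero and of more than one split prime separately, ruling out the former by root numbers) and that you include the $j_E\neq 0,1728$ argument here, whereas the paper defers it to the proof of Theorem~\ref{maintheoremsemistable}.
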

\begin{proof}
 If $|E(\mathbb{Q})_{\textrm{tors}}| \not\cong \mathbb{Z}/3\mathbb{Z}$, then $|E(\mathbb{Q})_{\textrm{tors}}|$ divides $c_{\infty}(E) \cdot c(E)$ with only finitely many exceptions by Parts $(i)$ and $(ii)$ of Theorem \ref{maintheorem}. However, since in each isogeny class of the Cremona database the curve with $1$ at the end of its label is the optimal one (with one exception; the curve with label 990H3 is optimal), we find that none of the exceptions is optimal. Therefore, we can assume from now on that $|E(\mathbb{Q})_{\textrm{tors}}| \cong \mathbb{Z}/3\mathbb{Z}$. Moreover, if $E/\mathbb{Q}$ has more than one prime of split multiplicative reduction, then Theorem \ref{exceptionlist} implies that $|E(\mathbb{Q})_{\textrm{tors}}|$ divides $ |\Sha(E/\mathbb{Q})|\cdot c(E)$. Hence, we can also assume that $E/\mathbb{Q}$ has no prime of split multiplicative reduction. We now show that this case cannot occur. Indeed, if $E/\mathbb{Q}$ has no primes of split multiplicative reduction, then $w(E)=w_{\infty}(E)=-1$ by \ref{rootnumbers}, but this is a contradiction because $w(E)=1=(-1)^{rk(E/\mathbb{Q})}=1$ again by \ref{rootnumbers} because we assume that $E/\mathbb{Q}$ has analytic rank $0$.
\end{proof}

\begin{lemma}\label{lemma3torsionsplit}
Let $p=2$ or $3$. Let $E/\mathbb{Q}$ be an elliptic curve which has split multiplicative reduction modulo $p$ and such that $3$ divides $|E(\mathbb{Q})_{\textrm{tors}}|$. Then $3$ divides $c_p(E)$.
\end{lemma}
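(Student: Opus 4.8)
The plan is to mimic the proof of Proposition \ref{2torsionprimeofadditivereduction}: I will exhibit a point of order exactly $3$ in the component group $\Phi_p := E(\mathbb{Q}_p)/E_0(\mathbb{Q}_p)$, which forces $3 \mid |\Phi_p| = c_p(E)$. First I would replace the given Weierstrass equation by one that is minimal at $p$ and pick a point $P \in E(\mathbb{Q})$ of order $3$, which exists since $3 \mid |E(\mathbb{Q})_{\textrm{tors}}|$; under the injection $E(\mathbb{Q}) \hookrightarrow E(\mathbb{Q}_p)$ the image of $P$ still has order $3$. Since $3P = O \in E_0(\mathbb{Q}_p)$, the image $\bar P$ of $P$ in $\Phi_p$ has order dividing $3$, so the whole problem reduces to showing that $P \notin E_0(\mathbb{Q}_p)$.

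For this I would use the exact sequence
\[
0 \longrightarrow E_1(\mathbb{Q}_p) \longrightarrow E_0(\mathbb{Q}_p) \longrightarrow \widetilde{E}_{\textrm{ns}}(\mathbb{F}_p) \longrightarrow 0
\]
with $E_1(\mathbb{Q}_p)$ the kernel of reduction. The point is that $E_1(\mathbb{Q}_p)$ has no element of order $3$: for $p = 2$ this holds because $3 \in \mathbb{Z}_2^{\times}$, so multiplication by $3$ is an automorphism of the formal group and hence of $E_1(\mathbb{Q}_2)$; for $p = 3$ it holds because $E_1(\mathbb{Q}_3) \cong \widehat{E}(3\mathbb{Z}_3)$ is torsion-free, the formal group $\widehat{E}(p^r\mathbb{Z}_p)$ being torsion-free as soon as $r > v(p)/(p-1)$ (here $1 > \tfrac12$), see \cite[Section VII.3]{aec}. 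Hence, were $P$ in $E_0(\mathbb{Q}_p)$, it would reduce to a point of order $3$ in $\widetilde{E}_{\textrm{ns}}(\mathbb{F}_p)$; but $E/\mathbb{Q}$ has \emph{split} multiplicative reduction at $p$, so $\widetilde{E}_{\textrm{ns}}(\mathbb{F}_p) \cong \mathbb{F}_p^{\times}$ has order $p - 1 \leq 2$ and contains no such point — a contradiction. Therefore $P \notin E_0(\mathbb{Q}_p)$, so $\bar P$ has order exactly $3$ in $\Phi_p$, and $3 \mid c_p(E)$.

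I do not expect a genuine obstacle here; the only place that needs a little care is the $p = 3$ instance of the claim that $E_1(\mathbb{Q}_p)$ has no $3$-torsion, which rests on the formal-group torsion bound rather than on $3$ being a unit. It is also worth recording that split multiplicativity is essential: for $p = 2$ with \emph{nonsplit} multiplicative reduction one has $\widetilde{E}_{\textrm{ns}}(\mathbb{F}_2) \cong \mathbb{Z}/3\mathbb{Z}$, and then $P$ may legitimately lie in $E_0(\mathbb{Q}_2)$. (If one prefers, the same conclusion follows from the Tate parametrization $E(\mathbb{Q}_p) \cong \mathbb{Q}_p^{\times}/q^{\mathbb{Z}}$ with $\mathrm{ord}_p(q) = c_p(E)$: since $\mu_3 \not\subset \mathbb{Q}_p$ for $p \in \{2,3\}$, any nontrivial $3$-torsion class is represented by a $u$ with $u^3 = q^m$ and $3 \nmid m$, and taking valuations gives $3 \mid m \cdot c_p(E)$, hence $3 \mid c_p(E)$.)
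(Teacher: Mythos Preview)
Your proof is correct and follows the same strategy as the paper's: show $P\notin E_0(\mathbb{Q}_p)$ via the short exact sequence, using that $E_1(\mathbb{Q}_p)$ has no $3$-torsion and that $\widetilde{E}_{\mathrm{ns}}(\mathbb{F}_p)\cong\mathbb{F}_p^{\times}$ has none either for $p\in\{2,3\}$. The only difference is cosmetic: the paper cites \cite[Theorem 5.9.4]{Krummthesis} for the absence of $3$-torsion in $E_1(\mathbb{Q}_p)$, whereas you supply the standard formal-group argument from \cite[Section VII.3]{aec} directly (and add the optional Tate-parametrization variant).
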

\begin{proof}
Let $P$ be a $\mathbb{Q}$-rational point of order $3$. There is a short exact sequence $$0 \longrightarrow E_1(\mathbb{Q}_p) \longrightarrow E_0(\mathbb{Q}_p) \longrightarrow \widetilde{E}_{\rm{ns}}(\mathbb{F}_p)\longrightarrow 0,$$
 where $E_0(\mathbb{Q}_p)$ consists of all points with nonsingular reduction and $E_1(\mathbb{Q}_p)$ is the kernel of the reduction map. We now claim that $P \in E(\mathbb{Q}_p)/ E_0(\mathbb{Q}_p)$, so $3 \mid c_p(E)$ because $c_p(E)= |E(\mathbb{Q}_p)/ E_0(\mathbb{Q}_p)|$. Assume that $P \in E_0(\mathbb{Q}_p)$ and we will find a contradiction. It follows from \cite[Theorem 5.9.4]{Krummthesis} that $E_1(\mathbb{Q}_p)$ cannot have points of order $3$ so $P \not\in E_1(\mathbb{Q}_p)$. This implies that $P$ reduces to a point $\widetilde{P}$ of order $3$ in $\widetilde{E}_{\rm{ns}}(\mathbb{F}_p)$. This is a contradiction because $E/\mathbb{Q}$ has split multiplicative reduction modulo $p$ and, hence, $\widetilde{E}_{\rm{ns}}(\mathbb{F}_p) \cong (\mathbb{F}_p^{*},\cdot)$, which does not contain points of order $3$ because $p=2$ or $3$. This proves our lemma.
\end{proof}

\begin{proposition}\label{prop1primeoptimalsemistable}
Let $E/\mathbb{Q}$ be a semi-stable elliptic curve with exactly one prime $p$ of split multiplicative reduction. Assume also that $E/\mathbb{Q}$ is optimal and that $E(\mathbb{Q})_{\textrm{tors}} \cong \mathbb{Z}/3\mathbb{Z}$. Then $|E(\mathbb{Q})_{\textrm{tors}}|$ divides $c_p(E)$.
\end{proposition}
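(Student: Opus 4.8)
The plan is to dispose of the primes $p\in\{2,3\}$ at once and then to exploit the rigid normal form of Section~\ref{section3}. If $p=2$ or $p=3$, then Lemma~\ref{lemma3torsionsplit} applies verbatim with this $p$ and gives $3\mid c_p(E)$, which is exactly the assertion since $|E(\mathbb{Q})_{\textrm{tors}}|=3$. So from now on assume $p\ge 5$.

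Next I would put $E/\mathbb{Q}$ into the shape of Section~\ref{section3}. Being semi-stable with a rational point of order $3$, $E/\mathbb{Q}$ has a model $y^2+a_1xy+by=x^3$ with $a_1,b\in\mathbb{Z}$, $b>0$, and for every prime $q$ either $q\nmid a_1$ or $q^3\nmid b$. If some prime $r$ divides $b$, then by Proposition~\ref{prop:3torsionreduction} the reduction modulo $r$ is of type II, IV$^*$ or split $\mathrm I_{3\,\mathrm{ord}_r(b)}$; the additive types are ruled out by semi-stability, so $E/\mathbb{Q}$ has split multiplicative reduction at $r$ with $3\mid c_r(E)$, whence $r=p$ and we are done. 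So I may assume $b=1$, i.e.\ $E\colon y^2+axy+y=x^3$ with $\Delta=a^3-27$ and $c_4=a(a^3-24)$; semi-stability at $3$ then forces $3\nmid a$, so $E/\mathbb{Q}$ has good reduction at $3$. The elementary core of the reduction step is that for a prime $q\ge5$ of multiplicative reduction one has $-c_6\equiv-27\pmod q$, so $E/\mathbb{Q}$ has \emph{split} multiplicative reduction at $q$ precisely when $-3$ is a square modulo $q$, i.e.\ $q\equiv1\pmod3$; and since $4(a^2+3a+9)=(2a+3)^2+27$, every prime $q\ge5$ dividing $a^2+3a+9$ is $\equiv1\pmod3$ and is therefore a prime of split multiplicative reduction of $E/\mathbb{Q}$. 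As $p$ is the only such prime and $a^2+3a+9>1$ is coprime to $6$ (using $3\nmid a$), this forces $a^2+3a+9=p^{k}$ for some $k\ge1$, and since $p\nmid a-3$ (otherwise $p\mid27$) we obtain $c_p(E)=\mathrm{ord}_p(\Delta)=k$. I would then argue by contradiction, assuming $3\nmid k$.

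The final step, which I expect to be the main obstacle, uses optimality through the companion curve $\widehat E\colon y^2+(a+6)xy+(a^2+3a+9)y=x^3$ and the $3$-isogeny $\phi\colon E\to\widehat E$ (see \cite[Theorem 1.1]{had}). Since $p\mid a^2+3a+9$ while $p\nmid a+6$ (otherwise $p\mid27$), Observation~1 shows $\widehat E/\mathbb{Q}$ has split multiplicative reduction at $p$ with $c_p(\widehat E)=\mathrm{ord}_p(\widehat\Delta)=3k$, so $3\mid c_p(\widehat E)$ although $3\nmid c_p(E)$; moreover $\widehat E$ again carries the rational $3$-torsion point $(0,0)$, so the $3$-isogeny class of $E$ is a chain with $E$ and $\widehat{\widehat E}:=\widehat E/\langle(0,0)\rangle$ at the ends and $\widehat E$ in the middle. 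What remains is to show that, with $3\nmid c_p(E)$, the curve $E$ cannot be the $X_0(N)$-optimal curve of this isogeny class: heuristically, $\phi$ is the isogeny whose kernel is the rational point $(0,0)$, so it points toward the curve $\widehat E$ whose $3$-torsion subgroup $\ker\hat\phi$ is of multiplicative type, and the $X_0(N)$-optimal curve is the one realizing such a multiplicative-type subgroup (coming from the Shimura subgroup). I would make this rigorous by a variant of the technique of \cite{mentzelosagasheconjecture}: comparing the modular parametrizations of $E$ and $\widehat E$ --- equivalently, the real periods $\Omega(E)$ and $\Omega(\widehat E)$, using that a semi-stable optimal curve has Manin constant $1$ by \cite{cesnaviciusmaninsemistable} --- one deduces that if $E$ were optimal then $3$ would after all divide $c_p(E)$, a contradiction. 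The genuinely delicate point is this last comparison, namely that within the chain the optimal curve is forced to have Tamagawa number at the unique split multiplicative prime $p$ divisible by $3$; everything preceding it is routine.
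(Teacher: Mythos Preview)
Your reduction to the case $p\ge 5$, $b=1$, $3\nmid a$, $a^2+3a+9=p^{k}$ with $c_p(E)=k$ is correct and in fact a little cleaner than the paper's (which invokes Miyawaki's classification \cite{miyawaki} to dispose of the one-bad-prime sub-case separately); one small slip is that when $r\mid b$ the additive types from Proposition~\ref{prop:3torsionreduction} are IV and IV$^*$, not II. The genuine gap is your final step. Knowing $c_p(\widehat E)=3k$, $c_p(E)=k$, and that a semi-stable optimal curve has Manin constant $1$ does not by itself locate the $X_0(N)$-optimal curve inside the isogeny class, and a comparison of real periods or modular degrees does not translate directly into a $3$-divisibility statement for $c_p(E)$. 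Your allusion to multiplicative-type kernels and the Shimura subgroup points toward real theorems (e.g.\ work of Vatsal), but you would need to state and apply a precise one, and nothing in Section~\ref{section3} or \cite{mentzelosagasheconjecture} supplies it. As written, the argument stops exactly where optimality must do the work.

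The paper proceeds entirely differently at this point. Writing $N=pq_1\cdots q_s$ and letting $\widehat\psi\colon E\to J_0(N)$ be the dual of the optimal quotient map, it invokes an explicit formula of Byeon--Yhee \cite[Theorem~2.3]{by3isogeny} (building on \cite{dummigan2005}) expressing $\widehat\psi(P)$, for the rational $3$-torsion point $P$, as a specific multiple of the cuspidal divisor class $[(P_{N'})-(P_N)]$. Reducing into the component group $\Phi_N(p)$ and using the known order of $(0)-(\infty)$ there \cite[Theorem~2.4]{by3isogeny}, together with $3\nmid q_i-1$ for each non-split prime $q_i$ (a fact you essentially reproved via the residue of $-3$), one checks that $\pi_{N,p}(\widehat\psi(P))\ne 0$. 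Functoriality then forces $P\notin E_0(\mathbb{Q}_p)$, hence $3\mid c_p(E)$. So optimality is used solely through the map $\widehat\psi$ and a cusp/component-group computation, not through periods or Manin constants.
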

\begin{proof}

If $p=2$ or $3$, then Lemma \ref{lemma3torsionsplit} implies that $3$ divides $c_p(E)$. Therefore, we can assume from now on that $p > 3$. Since $E/\mathbb{Q}$ has a point of order $3$, proceeding exactly as in the second paragraph of the proof of Theorem \ref{exceptionlist}, we find that $E/\mathbb{Q}$ can be given by a Weierstrass equation of the form 
\begin{align*}
    y^2+axy+by=x^3,
\end{align*}
where $a,b \in \mathbb{Z}$, $b>0$, and for every prime $q$ either $q \nmid a$ or $q^3 \nmid b$. If $p$ is a prime with $p \mid b$, then, since $E/\mathbb{Q}$ is semi-stable, it follows from Proposition \ref{prop:3torsionreduction} that $3 \mid c_p(E)$ and our proposition is proved. Hence, we can assume from now on that $b=1$. Moreover, it follows from work of Miyawaki \cite{miyawaki} that the only semi-stable elliptic curves $E/\mathbb{Q}$ with exactly $1$ prime of bad reduction and a $\mathbb{Q}$-rational point of order $3$ are the curves with Cremona labels 19a1, 19a3, 37b1, and 37b3. Out of those, the curves with Cremona labels 19a1 and 37b1 are optimal and they both satisfy $c(E)=3$. Therefore, we can assume that $E/\mathbb{Q}$ has at least $2$ primes of bad reduction and that $p > 3$.

The rest of our proof is inspired by an argument that appeared in \cite[Page 225]{by3isogeny}. Let $N$ be the conductor of $E/\mathbb{Q}$. Note that $N$ is square free because $E/\mathbb{Q}$ is semi-stable. Since $N$ is square free, using \cite[Page 103]{diamondshurman} we find that a system of representatives for the cusps of $X_0(N)$ can be obtained by considering for each positive divisor $d$ of $N$ all fractions of the form $\frac{a}{d}$, with $(a,d)=1$ and $a$ taken modulo $(d, \frac{N}{d})$ (see also \cite[Section 2]{yoorationalcuspidalx0(N)}). For each positive divisor $d$ of $N$, we define a divisor $(P_d)$ of $X_0(N)$ as the sum of all the cusps of the form $\frac{a}{d}$, with $a$ and $d$ as above.

Write $N=pq_1\cdot \cdot \cdot q_s$, where $p$ and $q_i$ for $i=1,...,s$ are distinct primes. Since $E/\mathbb{Q}$ has split multiplicative reduction modulo $p$ and $p \neq 3$, we obtain, using \cite[Lemma 3.1]{by3isogeny}, that $p \equiv 1 \; ( \text{mod } 3)$. Moreover, since $E/\mathbb{Q}$ has non-split multiplicative reduction modulo $q_i$ for all $i$ we see, again using \cite[Lemma 3.1]{by3isogeny}, that either $q_i \equiv 2 \; ( \text{mod } 3)$ for all $i=1,...,s$ or $q_{i_0}=3$ for some $i_0$ and $q_i \equiv 2 \; ( \text{mod } 3)$ for $i=1,...,s$ with $i \neq i_0$. Note that $3 \nmid q_i-1$ in both of the above cases.

Let $\phi : X_0(N) \rightarrow E$ be the modular parametrization and let $\psi: J_0(N) \rightarrow E $ be the induced morphism. We denote by $\widehat{\psi}: E(\mathbb{Q})_{\rm{tors}} \rightarrow J_0(N)(\mathbb{Q})_{\rm{tors}}$ the map induced by the dual of $\psi$. Let $\Phi_N(p)$ be the group of components of the N\'eron model of $J_0(N)_{\mathbb{Q}_p}/{\mathbb{Q}_p}$ and let $\pi_{N,p}: J_0(N)(\mathbb{Q})_{\rm{tors}} \rightarrow \Phi_N(p)$ be the reduction map. If $r: E(\mathbb{Q})_{\rm{tors}} \rightarrow E(\mathbb{Q}_p)/E_0(\mathbb{Q}_p)$ is the reduction map of $E/\mathbb{Q}$ and $\widehat{\psi}':E(\mathbb{Q}_p)/E_0(\mathbb{Q}_p) \rightarrow \Phi_N(p)$ is the map induced by by the dual of $\psi$, then $\widehat{\psi}' \circ r = \pi_{N,p} \circ \widehat{\psi}$ (see \cite[Page 225]{by3isogeny}). 

Let $N'=q_1\cdot \cdot \cdot q_s$. By \cite[Theorem 2.3]{by3isogeny} (see also \cite{byeonyhee2011} and \cite{dummigan2005}) we find that $E/\mathbb{Q}$ has a $\mathbb{Q}$-rational point $P$ of order $3$ such that $$\widehat{\psi}(P)=\frac{2(p-1)}{3h}\displaystyle\prod_{i=1}^s(q_i^2-1)[(P_{N'})-(P_N)],$$ where $h=(u,24)$ and $u=(p-1)\displaystyle\prod_{i=1}^s[(q_i^2-1)(p-1)]$. We note that $3$ divides $h$ because $p \equiv 1 \; ( \text{mod } 3)$. Since $\pi_{N,p}((P_{N'})-(P_N))=(0)-(\infty) $ (see \cite[Section 2.4]{by3isogeny}), we find that $$\pi_{N,p}( \widehat{\psi}(P))=\frac{2(p-1)}{3h}\displaystyle\prod_{i=1}^s(q_i^2-1)[(0)-(\infty)].$$ The order of $(0)-(\infty)$ in $\Phi_N(p)$ is $\frac{p-1}{k}\displaystyle\prod_{i=1}^s (q_i+1)$, for some $k$ which is equal to $2,4,6,$ or $12$ (see \cite[Theorem 2.4]{by3isogeny}). Therefore, since $3 \mid h$ and $3 \nmid q_i-1$ for all $i$, we find that $\pi_{N,p}( \widehat{\psi}(P))$ is non-trivial in $\Phi_N(p)$. This implies that we must have that $P \not\in E_0(\mathbb{Q}_p)$ because $\widehat{\psi}' \circ r = \pi_{N,p} \circ \widehat{\psi}$. Finally, since $P$ has order $3$ we see that $3 \mid c_p(E)$, which proves our proposition.
\end{proof}

\begin{proof}[Proof of Theorem \ref{maintheoremsemistable}]
Let $E/\mathbb{Q}$ be an optimal semi-stable elliptic curve with analytic rank $0$. Since $E/\mathbb{Q}$ is semi-stable we must have that $j_E \neq 0, 1728$. Indeed, every elliptic curve with $j$-invariant $j_E=0$ or $1728$ has everywhere potentially good reduction by \cite[Proposition VII.5.5]{aec} and if $E/\mathbb{Q}$ is, in addition, semi-stable then it has everywhere good reduction. However, there is no elliptic curve over $\mathbb{Q}$ that has everywhere good reduction. This proves that $j_E \neq 0, 1728$. If $E/\mathbb{Q}$ does not have exactly one prime of split multiplicative reduction, then $|E(\mathbb{Q})_{\textrm{tors}}|$ divides $c_{\infty}(E) \cdot |\Sha(E/\mathbb{Q})|\cdot c(E)$ by Proposition \ref{optimalsemistablenotexactly1prime}. Therefore, we can assume from now on that $E/\mathbb{Q}$ has exactly one prime of split multiplicative reduction. If $|E(\mathbb{Q})_{\textrm{tors}}| \not\cong \mathbb{Z}/3\mathbb{Z}$, then $|E(\mathbb{Q})_{\textrm{tors}}|$ divides $ c_{\infty}(E) \cdot c(E)$ with only finitely many exceptions by Parts $(i)$ and $(ii)$ of Theorem \ref{maintheorem}. However, none of the exceptions is optimal. Finally, if $|E(\mathbb{Q})_{\textrm{tors}}| \cong \mathbb{Z}/3\mathbb{Z}$, then $|E(\mathbb{Q})_{\textrm{tors}}|$ divides $c_p(E)$ by Proposition \ref{prop1primeoptimalsemistable}. This proves our theorem.
\end{proof}

\bibliographystyle{plain}
\bibliography{bibliography.bib}

\end{document}